\documentclass[11pt]{article}

\setlength{\textheight}{21cm}\setlength{\textwidth}{16cm}
\setlength{\topmargin}{-1cm}
\setlength{\oddsidemargin}{-0.2cm}
\setlength{\evensidemargin}{0.0cm}

\usepackage{epsfig,amsmath,amsfonts,amssymb,latexsym,color,amsthm,hhline,multirow,subfigure,graphicx,bm}
\usepackage{enumerate}

\usepackage{tikz}
\usepackage{pgfplots}
\usetikzlibrary{hobby}

\theoremstyle{plain}
\newtheorem{theorem}{Theorem}[section]
\theoremstyle{definition}

\theoremstyle{definition}

\theoremstyle{definition}

\theoremstyle{definition}

\theoremstyle{plain}

\theoremstyle{plain}
\newtheorem{proposition}[theorem]{Proposition}
\theoremstyle{plain}
\newtheorem{lemma}[theorem]{Lemma}
\theoremstyle{plain}

\theoremstyle{definition}

\newcommand{\E}{{\mathbb E}}
\newcommand{\RR}{{\mathbb R}}

\newcommand{\PP}{{\mathbb P}}
\newcommand{\sss}{\scriptscriptstyle}

\newcommand{\vep}{\varepsilon}

\newcommand{\Var}{{\textup{Var}}}
\newcommand{\Cov}{{\textup{Cov}}}
\newcommand{\Corr}{{\textup{Corr}}}

\begin{document}

%%%%%%%%%%%%%%%%%%%%%%%% TITLE PAGE %%%%%%%%%%%%%%%%%%%%%%%%

\title{Mean field stable matchings}

\author{Daniel Ahlberg\thanks{Department of Mathematics, Stockholm University; {\tt \{daniel.ahlberg\}\{mia\}\{matteo.sfragara\}@math.su.se}}\and Maria Deijfen\footnotemark[1] \and Matteo Sfragara\footnotemark[1]}

\date{June 2024}
\maketitle

\begin{abstract}
\noindent Consider the complete bipartite graph on $n+n$ vertices where the edges are equipped with i.i.d.\ exponential costs. A matching of the vertices is stable if it does not contain any pair of vertices where the connecting edge is cheaper than both matching costs. There exists a unique stable matching obtained by iteratively pairing vertices with small edge costs. We show that the total cost $C_{n,n}$ of this matching is of order $\log n$ with bounded variance, and that $C_{n,n}-\log n$ converges to a Gumbel distribution. We also show that the typical cost of an edge in the matching is of order $1/n$, with an explicit density on this scale, and analyze the rank of a typical edge.  These results parallel those of Aldous for the minimal cost matching in the same setting. We then consider the sensitivity of the matching and the matching cost to perturbations of the underlying edge costs. The matching itself is shown to be robust in the sense that two matchings based on largely identical edge costs will have a substantial overlap. The matching cost however is shown to be noise sensitive, as a result of the fact that the most expensive edges will with high probability be replaced after resampling. Our proofs also apply to the complete (unipartite) graph and the results in this case are qualitatively similar.
\vspace{0.3cm}

\noindent \emph{Keywords:} Stable matching, bipartite matching, matching cost, Poisson weighted infinite tree, chaos, noise sensitivity.

\vspace{0.2cm}

\noindent AMS 2020 Subject Classification: 60C05,05C70.
\end{abstract}

%%%%%%%%%%%%%%%%%%%%%% SECTION %%%%%%%%%%%%%%%%%%%%%%%%%

\section{Introduction}
\label{sec:intro}

Consider a situation where a number of objects acting to maximize their own satisfaction are to be matched. Each object ranks the other objects and a matching is then said to be stable if there is no pair of objects that would prefer to be matched to each other rather than their current partners. The concept was introduced in the seminal paper \cite{GS} by David Gale and Lloyd Shapley in 1962 and has since received a lot of attention in many different research areas. In 2012, Lloyd Shapley and Alvin Roth received the Nobel Memorial Prize in Economic Sciences for their work on developing mathematical theory for stable matchings and for applications in economics, respectively. 

The most basic situation described in \cite{GS} consists of matching $n$ men and $n$ women on the marriage market, with only matchings between men and women allowed. This is referred to as the stable marriage problem. It is shown that this problem always (that is, for all ranking lists) has at least one solution, and an algorithm for producing a stable matching is also given. The corresponding problem without the bipartite structure is known as the stable roommates problem, alluding to the problem of allocating a number of students to double rooms in a dormitory. In this case, a stable matching may not exist. A polynomial time algorithm that determines if a matching exists and, if so, outputs the matching is described in \cite{I85}. For more extensive accounts on general theory for stable matchings, we refer to the books \cite{GI89,K96,M13} and references therein.

We will consider stable matchings on the complete bipartite graph $K_{n,n}$ and on the complete graph $K_n$, where the preferences are governed by i.i.d.\ random edge costs. Let us first focus on $K_{n,n}$, which consists of two disjoint vertex sets $V_n=\{v_1,\ldots,v_n\}$ and $V'_n=\{v'_1,\ldots,v'_n\}$, and edge set $E_n=\{(v,v'):v\in V_n,v'\in V'_n\}$. Each edge $e=(v,v')$ in the graph is independently assigned an exponential random variable $\omega(e)$ with mean 1. A \textbf{matching} is a subset $M\subset E_n$ of non-adjacent edges, and a vertex is \textbf{matched} in $M$ if it is contained in an edge of $M$. The matching is \textbf{perfect} if all vertices are matched. The \textbf{partner} of $v$ in $M$ is given by
$$
M(v)=\left\{
        \begin{array}{ll}
        v' & \mbox{if } (v,v')\in M;\\
	\emptyset & \mbox{if $v$ is not matched},
        \end{array}
            \right.
$$
and the \textbf{matching cost} of $v$ in $M$ is defined as
$$
c(v)=\left\{
        \begin{array}{ll}
        \omega((v,M(v))) & \mbox{if $v$ is matched};\\
	\infty & \mbox{if $v$ is not matched}.
        \end{array}
            \right.
$$
A matching is \textbf{stable} if there do not exist any pair of vertices with an edge between them that is cheaper than both matching costs, that is, if
\begin{equation}\label{eq:stable}
\forall v\in V_n, v'\in V'_n: (v,v')\not\in M\Rightarrow \omega((v,v'))>\min\{c(v),c(v')\}.
\end{equation}
Vertices hence rank potential partners based on the cost of the connecting edge, and prefer to be matched as cheaply as possible. A vertex pair violating \eqref{eq:stable} consists of vertices that would prefer to be matched to each other rather than to their current partners, and is therefore called an \textbf{unstable pair}. The following algorithm yields an almost surely unique stable matching on $K_{n,n}$:\medskip

\noindent  \textbf{Greedy algorithm.} First select the cheapest edge $(v,v')$ in the graph and include this in the matching. Erase all other edges incident to $v$ and $v'$. Then select the cheapest edge $(u,u')$ among the remaining edges and include this in the matching. Erase all other edges incident to $u$ and $u'$. Repeat until all vertices have been matched. \medskip

It follows by induction over the steps in the algorithm that all edges created by the algorithm must be included in any stable matching, since omitting any of the edges would result in an unstable pair. Note that it is important that the edge costs are almost surely distinct. Also note that the matching is perfect. 

The concept of a stable matching can be defined analogously on the complete graph $K_n$, and the algorithm then produces a unique stable matching which is perfect if and only if $n$ is even (and otherwise has exactly one unmatched vertex). In our setting, a stable matching hence always exists also in the non-bipartite case. This is because basing the preferences on random edge cost leads to heavily correlated ranking lists. Indeed, if $v$ is highly ranked by $v'$, it means that the edge $(v,v')$ has a small cost, which implies that $v'$ is most likely also highly ranked by $v$.

Matchings on weighted graphs have previously been studied in connection with the so-called random assignment problem. The task is then to assign $n$ jobs to $n$ machines in such a way that the total cost of performing all jobs is minimized. The input consists of a complete bipartite graph with i.i.d.\ exponential edge weights, specifying the pairwise costs, and the goal is to find a perfect matching that minimizes the total cost
$$
C(M)=\sum_{v\in V}c(v).
$$
In the seminal paper \cite{aldous01}, Aldous proved that the total cost of the minimal matching converges to $\pi^2/6$, which had been conjectured for quite some time. He also analyzed the cost and rank of a typical edge in the minimal matching, and showed that any matching differing from the minimal one in $O(1)$ edges is asymptotically significantly more costly; see Section \ref{sec:minimal} for further details. Background and results predating \cite{aldous01} can be found in \cite{mezpar87,par98,steele97}, and later results e.g.\ in \cite{wastlund09,wastlund12}. 

In this paper, we derive results for the stable matching that parallel those of Aldous \cite{aldous01} for the minimum matching; see Theorems \ref{theorem1}-\ref{theorem4} below. The behaviour that we encounter differs from that of the minimum matching in that the greedy matching results in a heavier edges being added at the end of the process. We then proceed to study the sensitivity of the stable matching with respect to small perturbations of the edge costs. In analogy with Aldous’ asymptotic essential uniqueness (AEU) property, we show that updating a small proportion of the edge costs has a limited effect on which edges are contained in the matching (Theorem \ref{theorem4}). As highlight of the paper, however, we show that the most expensive edges (the `tail') of the matching are very likely to be replaced by such a perturbation (Theorem \ref{thm:tailsensitivity}). This is a consequence of the larger cost of the stable matching compared to the minimum matching, where the same behaviour should not occur. Moreover, although the bulk of the stable matching contributes with the lion part of its cost, most of the randomness in its total cost comes from its tail. As a consequence of the sensitivity of the most expensive edges in the matching it follows that the matching cost is highly sensitive to resampling a small proportion of the edge weights (Theorem \ref{thm:weightsensitivity}). To the best of our knowledge, this is the first confirmed instance where chaotic behaviour of the minimising structure and noise sensitivity of the minimised function does not come hand in hand; however, see recent work of Israeli and Peled \cite{IP} for results of a similar flavour.

\subsection{Results}
\label{sec:results}

Let $S_{n,n}$ denote the unique stable matching on $K_{n,n}$ based on i.i.d.\ exponential edge weights $\{\omega(e)\}_{e\in E_n}$ with mean 1, and write $C_{n,n}=C(S_{n,n})$ for the total cost of the matching. Our first result specifies the asymptotic behavior of $C_{n,n}$ and is the analogue of \cite[Theorem 1]{aldous01}. In contrast to \cite[Theorem 1]{aldous01}, we also obtain a distributional limit of the centered total cost.

\begin{theorem}[The total cost]
\label{theorem1}
We have that 
$$
\lim_{n \to \infty} \frac{\E[C_{n,n}]}{\log n} = 1 \qquad \text{ and } \qquad \lim_{n \to \infty} \Var(C_{n,n}) =\frac{\pi^2}{6}.
$$
Furthermore, $C_{n,n}-\log n\stackrel{d}{\to} G$, where $G$ is a Gumbel distributed random variable.
\end{theorem}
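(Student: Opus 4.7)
The cleanest route is to reduce $C_{n,n}$ to a classical sum of independent exponentials. Let $X_0 = 0 < X_1 < \cdots < X_n$ denote the costs of the edges added by the greedy algorithm in the order they are selected. The crucial step is the distributional identity
$$ E_k := X_k - X_{k-1} \sim \mathrm{Exp}\bigl((n-k+1)^2\bigr), \qquad k = 1, \ldots, n, $$
with $E_1, \ldots, E_n$ independent. I would argue this via a Poisson-clocks coupling: attach to every edge an independent $\mathrm{Exp}(1)$ alarm and process alarms in time order, admitting an edge to $S_{n,n}$ whenever both endpoints are still unmatched. After $k-1$ admissions, exactly $(n-k+1)^2$ edges have both endpoints unmatched, and by the memoryless property the waiting time to the next ring among them is $\mathrm{Exp}((n-k+1)^2)$, independent of the past.

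Given this, summation by parts combined with the scaling $c\cdot\mathrm{Exp}(\lambda) \stackrel{d}{=} \mathrm{Exp}(\lambda/c)$ yields
$$ C_{n,n} = \sum_{k=1}^n X_k = \sum_{j=1}^n (n-j+1)E_j \stackrel{d}{=} \sum_{m=1}^n F_m, $$
where $F_m \sim \mathrm{Exp}(m)$ are independent. The moment claims are then immediate: $\E[C_{n,n}] = H_n = \log n + \gamma + o(1)$ and $\Var(C_{n,n}) = \sum_{m=1}^n m^{-2} \to \pi^2/6$. For the distributional limit I invoke the R\'enyi representation of the order statistics of $n$ i.i.d.\ $\mathrm{Exp}(1)$ variables $Y_1, \ldots, Y_n$: the spacings are independent with $Y_{(m)} - Y_{(m-1)} \sim \mathrm{Exp}(n-m+1)$, so telescoping and reindexing gives $\max_{i \le n} Y_i = Y_{(n)} \stackrel{d}{=} \sum_{m=1}^n F_m$. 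Hence $C_{n,n} \stackrel{d}{=} \max_{i \le n} Y_i$, and the classical extreme value theorem delivers $C_{n,n} - \log n \stackrel{d}{\to} G$.

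The main obstacle is the rigorous verification of the increment identity for $(E_k)$. One must show that when the greedy algorithm deletes the edges incident to the matched pair at step $k$, the conditional law of the surviving $(n-k+1)^2$ weights, given the history, is that of independent exponentials shifted by $X_{k-1}$. This combines exchangeability of the original i.i.d.\ weights with memorylessness; the Poisson-clocks formulation renders the bookkeeping essentially trivial, but a clean inductive statement (and care to verify that conditioning on the identity of the realizing edge does not disturb the others) is still required. Everything else reduces to textbook facts about sums of independent exponentials and the Gumbel extreme value theorem.
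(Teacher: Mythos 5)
Your proposal is correct. The reduction to $C_{n,n}\stackrel{d}{=}\sum_{m=1}^n F_m$ with $F_m\sim\mathrm{Exp}(m)$ independent is exactly the representation \eqref{tk} used in the paper (derived there, as by you, from the memoryless property applied to the greedy/Poisson-clock construction), and the moment computations coincide. Where you genuinely diverge is the distributional limit: you identify $\sum_{m=1}^n F_m$ with $\max_{i\le n}Y_i$ via R\'enyi's representation of exponential order statistics and then invoke classical extreme value theory, whereas the paper centers each $Z_k$ and computes the moment generating function of the centered sum, recognizing the limit $\Gamma(1-t)e^{-\gamma t}$ as the Gumbel MGF via the Weierstrass product for $\Gamma$. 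The two limits agree: the paper gets Gumbel$(-\gamma,1)$ for $C_{n,n}-H_n$, which after $H_n=\log n+\gamma+o(1)$ is the standard Gumbel you obtain. Your route is shorter and more elementary, requiring no special-function identities. The paper's MGF route buys something you would lose, however: it transfers verbatim to the complete graph $K_n$, where the analogous sum is $\sum_{k=1}^{n/2}\mathrm{Exp}(2k-1)$ — not the maximum of i.i.d.\ exponentials — and the limit law is shown \emph{not} to be Gumbel precisely by inspecting the limiting MGF $\Gamma(1-t)e^{\gamma t/2}/\Gamma(1-t/2)$. Your one flagged ``obstacle,'' the rigorous justification of the increment law for $E_k$, is handled in the paper exactly as you suggest (memorylessness after each deletion step) and is not a gap.
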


There are $n^2$ edges in $K_{n,n}$ and hence the cost of the cheapest edge, which is for sure part of the stable matching, is of the order $1/n^2$. The typical cost of an edge in the matching however is of the order $1/n$, as stated in the next theorem. Note that the vertices are exchangeable and hence the matching cost $c(v)$ of vertex $v$ has the same distribution for all vertices $v\in V_n$. This is also the distribution of the cost of a randomly chosen edge contained in the matching. Scaling the cost by $n$ turns out to give rise to a proper random variable with an explicit distribution in the limit. This is the analogue of \cite[Theorem 2]{aldous01}.

\begin{theorem}[The typical matching cost]\label{theorem2}
For any vertex $v$, the cost $nc(v)$ in $S_{n,n}$ converges in distribution as $n\to\infty$ to a random variable $W$ with density
\begin{equation}\label{typdens}
f_W(x) = \frac{1}{(1+x)^2}, \qquad x \in [0, \infty).
\end{equation}
\end{theorem}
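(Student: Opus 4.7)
The plan is to pass to the local weak limit of $(K_{n,n},v)$ with edge costs multiplied by $n$, which is the Poisson weighted infinite tree (PWIT). On the PWIT, stability gives a recursive distributional equation for the root's match cost whose unique solution has density $1/(1+x)^2$; tightness together with a short decoupling argument then transfers the limit back to $K_{n,n}$.

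First I would set up the local limit. Under the rescaling $\omega \mapsto n\omega$, the scaled costs of the $n$ edges at $v$ form a point process whose ordered values converge to a rate-1 Poisson process $0<\xi_1<\xi_2<\cdots$ on $[0,\infty)$. Iterating at every vertex, $(K_{n,n},v,n\omega)$ converges in the local weak sense to the standard PWIT, in which each vertex has an independent rate-1 Poisson process of children.

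Next I would derive the recursion. On the PWIT let $W$ denote the scaled match cost of the root. Stability forces the root to be matched to its $i$th cheapest neighbour if and only if $\xi_i<W'_i$ and $\xi_j\ge W'_j$ for every $j<i$, where $W'_i$ is the match cost of the $i$th neighbour in the subtree obtained by removing the edge back to the root. Self-similarity of the PWIT makes $W'_1,W'_2,\ldots$ i.i.d.\ copies of $W$, independent of the $\xi_i$. The set $\{i:\xi_i<W'_i\}$ is therefore a $p$-thinning of the Poisson process with survival probability $p(t)=\PP(W>t)$, so setting $G(x):=\PP(W>x)$,
$$
G(x) \,=\, \exp\!\left(-\int_0^x G(t)\,dt\right).
$$
Differentiating yields $G'=-G^2$ with $G(0)=1$, whose unique solution is $G(x)=1/(1+x)$; the corresponding density $-G'(x)=1/(1+x)^2$ is the claimed $f_W$. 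The same calculation can be read off directly from the greedy algorithm: in the rescaled time variable $t=n\tau$, a typical vertex in $V'_n$ is unmatched with probability $\approx G(t)$, so $v$'s matching rate while unmatched is $\approx G(t)$, which yields $\PP(nc(v)>t)\approx G(t)$ and closes the mean-field loop $G=\exp(-\int_0^\cdot G)$.

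Finally I would transfer the limit to the finite graph. Tightness of $nc(v)$ is immediate since $c(v)$ is bounded above by the scaled cost of the first edge at $v$ whose other endpoint happens to still be unmatched in the greedy procedure. For any subsequential weak limit $W^\star$, I would truncate to the $k$ cheapest neighbours of $v$, approximate their match costs in the graph with $v$ removed by independent copies of the matching cost distribution on a fresh bipartite graph of slightly smaller size using exchangeability, and let $n\to\infty$ and then $k\to\infty$ to recover the same fixed-point equation for the law of $W^\star$. Uniqueness of the probability solution then forces $W^\star\stackrel{d}{=}W$. The main obstacle is this last decoupling step: the match costs of the $k$ cheapest neighbours of $v$ in $K_{n,n}\setminus\{v\}$ are coupled through the global greedy dynamics, and showing that this coupling is asymptotically negligible — essentially, that the conditional law of each such match cost given the others is close to its unconditioned law — is the technical heart of the proof, paralleling the estimates Aldous uses in \cite{aldous01} but now adapted to the heavier-tailed greedy matching.
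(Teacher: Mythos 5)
Your route---local convergence to the PWIT, the recursive distributional equation $G(x)=\exp(-\int_0^x G(t)\,dt)$ with solution $G(x)=1/(1+x)$, then a transfer back to $K_{n,n}$---is not the one the paper takes. The paper's proof is entirely elementary and bypasses the PWIT: by the greedy construction \eqref{yk}, the $k$th edge added has cost $Y_k=\sum_{i\le k}X_i$ with $X_i\sim\mathrm{Exp}((n-i+1)^2)$; by exchangeability $c(v)\stackrel{d}{=}Y_{\lceil Un\rceil}$ with $U$ uniform on $[0,1]$; and a mean--variance computation shows $nY_{\lceil\alpha n\rceil}\to\alpha/(1-\alpha)$ in probability, whence $F_W\bigl(\alpha/(1-\alpha)\bigr)=\alpha$, i.e.\ $F_W(x)=x/(1+x)$. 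Your closing ``mean-field loop'' heuristic is exactly this fact in disguise (the unmatched fraction at scaled time $t$ is $1-\alpha=1/(1+t)$), but in the paper it is a two-line rigorous computation rather than a heuristic. Note also that the paper's logical order is the reverse of yours: Theorem~\ref{theorem2} is proved first and is then \emph{used} to identify the law of the root's matching cost on the PWIT (Proposition~\ref{prop:PWITcost}), which feeds the rank computation; so you cannot cite that proposition without circularity, and your RDE must stand on its own.

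Beyond being a different route, the proposal has a genuine gap: the transfer from the PWIT fixed point back to $K_{n,n}$ is exactly the step you do not carry out, and you say so yourself (``the technical heart''). As written, the argument identifies the law of $W$ on the limiting tree but does not show that $nc(v)$ converges to it; the match costs of $v$'s cheapest neighbours in $K_{n,n}\setminus\{v\}$ are coupled through the global greedy dynamics, and declaring this coupling negligible is the substance of the proof, not a remark---in Aldous's treatment of the minimal matching the analogous estimates occupy most of the paper. If you want to salvage this route, the efficient way (and the one the paper uses when it genuinely needs to pass between $K_{n,n}$ and the PWIT, namely for Theorem~\ref{theorem3}) is via descending paths: the partner of $v$ is determined by the finite graph $D_v(K_{n,n},s)$ (Proposition~\ref{prop:stabledescending}), which can be coupled to $D_0(\mathcal{T},s)$ (Proposition~\ref{prop:couplingpaths}); this localisation replaces your decoupling step entirely. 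Separately, your tightness claim is not ``immediate'' as stated---the bound by ``the first edge whose endpoint is still unmatched'' is circular, since which endpoints are unmatched when $v$ is matched depends on $c(v)$ itself; tightness does hold, but is most easily read off from $c(v)\stackrel{d}{=}Y_{\lceil Un\rceil}$ together with $\E[Y_{n-\ell}]\le 1/\ell$. Given that this representation already yields the full theorem, the PWIT detour is unnecessary here.
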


Next consider the typical rank of an edge in the matching. Specifically, order the edges incident to vertex $v\in V_n$ in $K_{n,n}$ according to increasing edge cost and let $R_n$ be a random variable indicating the rank of the edge that is used in the stable matching, that is, $R_n=r$ if the matching uses the $r$th cheapest edge of vertex $v$. The following result is the analogue of \cite[Theorem 3]{aldous01}.

\begin{theorem}[The edge rank]
\label{theorem3}
We have that $R_n\stackrel{d}{\to} R$ as $n\to \infty$, where
\begin{itemize} 
\item[\rm{(i)}] $\PP(R=1) = e \int_1^{\infty} \frac{e^{-x}}{x}\, dx \approx 0.596;$
\item[\rm{(ii)}] $\PP(R \geq r) \sim \frac{1}{r}$, as $r \to \infty$.
\end{itemize}
\end{theorem}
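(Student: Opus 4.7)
The plan is to work in the Poisson weighted infinite tree (PWIT) limit used in the proofs of Theorems \ref{theorem1} and \ref{theorem2}. In that limit, the edges incident to a typical vertex $v$, after rescaling costs by $n$, form a rate-one Poisson process $\xi_1 < \xi_2 < \cdots$ on $[0,\infty)$; moreover, deleting the edge from $v$ to its $k$-th closest neighbour $v_k$ leaves $v_k$ as the root of an independent PWIT, so by self-similarity the matching cost $W_k$ of $v_k$ in this reduced tree is an i.i.d.\ copy of the limiting matching cost $W$, independent of $(\xi_j)_{j \geq 1}$. The greedy description of the stable matching then says that $v$ is matched to $v_R$, where
\begin{align*}
R := \min\{k \geq 1 : \xi_k < W_k\},
\end{align*}
and the rank $R_n$ converges in distribution to $R$.

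For part (i), since $\xi_1 \sim \textup{Exp}(1)$ is independent of $W_1$, and $\PP(W > x) = 1/(1+x)$ by Theorem \ref{theorem2}, I would compute
\begin{align*}
\PP(R = 1) \,=\, \PP(\xi_1 < W_1) \,=\, \int_0^\infty \frac{e^{-x}}{1+x}\, dx \,=\, e \int_1^\infty \frac{e^{-y}}{y}\, dy,
\end{align*}
via the substitution $y = 1+x$.

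For part (ii), I would use a thinning argument. The Poisson points $\xi_k$ at which additionally $W_k > \xi_k$ (the ``accepting'' points) form, conditional on $(\xi_k)$, a Bernoulli thinning with retention probability $\PP(W > t) = 1/(1+t)$; hence they are a Cox process of intensity $1/(1+t)$ whose first point is $\xi_R$. This recovers $\PP(\xi_R > w) = 1/(1+w)$, and shows that conditional on $\xi_R = w$, the number $R-1$ of rejected points in $[0,w)$ is Poisson with mean $\lambda(w) := \int_0^w t/(1+t)\,dt = w - \log(1+w)$. Therefore
\begin{align*}
\PP(R \geq r) \,=\, \int_0^\infty \PP\!\bigl(\textup{Poisson}(\lambda(w)) \geq r-1\bigr)\,\frac{dw}{(1+w)^2}.
\end{align*}
Since $\lambda(w) \sim w$, the integrand transitions from $0$ to $1$ in a window of width $O(\sqrt{r})$ around $w^* = r + \log r$; splitting the integral at $w^*$ and applying Poisson concentration should yield $\PP(R \geq r) = (1+o(1))/(1+w^*) \sim 1/r$.

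The main obstacle I anticipate is justifying the convergence $R_n \stackrel{d}{\to} R$ rigorously: in principle $R_n$ depends on cheap edges arbitrarily far from $v$ via chains of swaps. I expect this to follow from the same PWIT coupling that underlies Theorems \ref{theorem1}--\ref{theorem2}, combined with an a priori estimate showing that with high probability only a bounded-depth neighbourhood of $v$ determines the matching partner of $v$, so that the greedy procedure on the truncated neighbourhood agrees with that on the full PWIT up to an event of vanishing probability.
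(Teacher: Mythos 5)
Your proposal is correct and follows essentially the same route as the paper: pass to the PWIT, write $R=\min\{k\ge1: \xi_k<W_k\}$ with $W_k$ i.i.d.\ copies of $W$ independent of the Poisson process, compute (i) directly (your integral $\int_0^\infty e^{-x}/(1+x)\,dx$ equals the paper's $\int_0^\infty (1-e^{-w})/(1+w)^2\,dw$ by integration by parts), and obtain (ii) from the same thinning with retention probability $1-F_W(t)=1/(1+t)$ — the paper just conditions on $T_r$ and writes $\PP(R>r)=\E[1/(1+T_r)]$ rather than conditioning on the first accepted point. The locality issue you flag for $R_n\stackrel{d}{\to}R$ is exactly what the paper resolves via its descending-path machinery (Propositions \ref{prop:stabledescending}--\ref{prop:couplingpaths}), which shows the partner of $v$ is determined by the finite set of descending paths from $v$ below a level $s$.
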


Some structures arising from i.i.d.\ configurations have recently been shown to exhibit a chaotic behavior with respect to perturbations of the underlying configuration. This direction of research first arose in the literature on disordered systems, to which combinatorial optimization problems such as minimal matchings are considered related. Specifically, it has been observed that resampling only a very small fraction of the underlying configuration can cause substantial changes to some structures; see e.g.\ \cite{ADS22,BLZ20,chatterjee14a,GH20a}. Our next result shows that this is not the case for $S_{n,n}$. Let $\omega=\{\omega(e)\}_{e\in E_n}$ and $\omega'=\{\omega'(e)\}_{e \in E_n}$ be two independent random configurations of i.i.d.\ mean 1 exponential edge costs, and let $\{U(e)\}_{e\in E_n}$ be i.i.d.\ uniform variables on $[0,1]$ independent of $\omega$ and $\omega'$. For $\varepsilon \in [0,1]$, define $\omega_\vep=\{\omega_\vep(e)\}_{e\in E_n}$ to be a configuration where a fraction $\vep$ of the entries in $\omega$ are replaced by their counterparts in $\omega'$, that is, 
\begin{equation}\label{eq:omega_t}
\omega_{\varepsilon}(e):=\left\{
\begin{aligned}
\omega(e) &&& \text{if }U(e)>\varepsilon,\\
\omega'(e) &&& \text{if }U(e)\le \varepsilon.
\end{aligned}
\right.
\end{equation}
Let $S_{n,n}^{\varepsilon}$ denote the stable matching based on $\omega_\vep$. The following result shows that the fraction of edges in $S_{n,n}^0$ that are also part of $S_{n,n}^\vep$ converges to 1 as $\vep\to 0$.

\begin{theorem}[Robustness of the matching]\label{theorem4}
For any $\varepsilon > 0$, there exists a constant $C >7$ such that
$$
\lim_{n \to \infty} \frac{\E\left[|S_{n,n}^0 \cap S_{n,n}^{\varepsilon}|\right]}{n} \geq 1 - C \frac{1}{\log\left(\frac{1}{\varepsilon}\right)}.
$$
\end{theorem}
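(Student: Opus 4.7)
The plan is to reduce the problem, via exchangeability, to a single-vertex statement and then control the probability of a partner change by combining Theorem~\ref{theorem2} with a local analysis of the greedy algorithm around $v_1$. Since $S_{n,n}^0$ is a perfect matching and the joint law of $(\omega,\omega_\varepsilon)$ is invariant under permutations of $V_n$,
$$
\E\bigl[|S_{n,n}^0 \cap S_{n,n}^\varepsilon|\bigr] \;=\; \sum_{v \in V_n} \PP\bigl(M^0(v) = M^\varepsilon(v)\bigr) \;=\; n\,\PP\bigl(M^0(v_1) = M^\varepsilon(v_1)\bigr),
$$
where $M^0,M^\varepsilon$ denote the partners in the two stable matchings. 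It thus suffices to show $\PP(M^0(v_1) \neq M^\varepsilon(v_1)) \le C/\log(1/\varepsilon) + o(1)$ as $n\to\infty$. For a threshold $T = T(\varepsilon)$ to be chosen,
$$
\PP\bigl(M^0(v_1) \neq M^\varepsilon(v_1)\bigr) \;\le\; \PP\bigl(n c^0(v_1) > T\bigr) \;+\; \PP\bigl(M^0(v_1) \neq M^\varepsilon(v_1),\, n c^0(v_1) \le T\bigr),
$$
and Theorem~\ref{theorem2} gives that the first term tends to $1/(1+T) \le 1/T$ as $n\to\infty$.

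The second term is where the local geometry of the greedy matching enters. On $\{n c^0(v_1) \le T\}$ the partner $M^0(v_1)$ is determined by a local subgraph whose construction iterates the recursive consequence of \eqref{eq:stable} that every cheaper alternative of a vertex $u$ must itself be matched strictly cheaper. Define an exploration tree $\mathcal{T}(v_1) \subset V_n \cup V'_n$ by starting at $v_1$ and, for each vertex $u$ already included with associated cost bound $c^0(u)$, adjoining all cheaper alternatives $w$ (those on the opposite side with $\omega((u,w)) < c^0(u)$) together with their partners $M^0(w)$. Because cost bounds strictly decrease along every branch, $\mathcal{T}(v_1)$ is almost surely finite and $M^0(v_1)$ is a function of the restriction of $\omega$ to $\mathcal{T}(v_1)$. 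Comparison with the PWIT limit of \cite{aldous01} bounds the expected size of $\mathcal{T}(v_1)$ on $\{n c^0(v_1) \le T\}$ by $e^{\alpha T}$ for an explicit constant $\alpha$, as the tree can be dominated level by level by a branching process whose expected generation sizes satisfy a convolution that sums geometrically.

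The coupling step is then: if $\omega_\varepsilon$ agrees with $\omega$ on every edge incident to $\mathcal{T}(v_1)$ and no resampled edge creates a new cost-$\le T/n$ incidence at a vertex of $\mathcal{T}(v_1)$, then the greedy algorithm applied to $\omega_\varepsilon$ reproduces $\mathcal{T}(v_1)$ verbatim and in particular $M^\varepsilon(v_1) = M^0(v_1)$. A union bound over the $O(|\mathcal{T}(v_1)|)$ edges internal to the tree (each resampled with probability $\varepsilon$) and the $O(n|\mathcal{T}(v_1)|)$ edges leaving it (each resampled with probability $\varepsilon$ and, conditionally, taking new weight $\le T/n$ with probability of order $T/n$) bounds the second term by $O(\varepsilon T e^{\alpha T})$. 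Setting $T = c_1 \log(1/\varepsilon)$ with $c_1 \alpha < 1$ makes this negligible compared to $1/T = 1/(c_1 \log(1/\varepsilon))$ and yields the desired bound with $C = 1/c_1$; taking $c_1$ small enough produces any prescribed $C$, in particular $C > 7$.

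The main obstacle I anticipate is the quantitative control of $\E|\mathcal{T}(v_1)|$ uniformly in $n$: since the tree branches simultaneously through cheaper alternatives and their partners, the underlying two-type branching process has conditional cost-bound distributions (at each child, $c^0$ rather than the edge weight itself) that must be tracked carefully. The natural route is to couple to the Poisson-weighted infinite tree of \cite{aldous01} and dominate by a tractable Galton--Watson process; once this size bound is secured, the remainder of the argument is a routine combination of a threshold decomposition with a union bound.
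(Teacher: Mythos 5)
Your proposal follows essentially the same route as the paper: reduce to a single vertex by exchangeability, truncate the matching cost at a threshold $T\asymp\log(1/\varepsilon)$ using Theorem~\ref{theorem2}, argue that the partner is determined by a local structure whose expected size is bounded by $e^{\alpha T}$ uniformly in $n$ via coupling with the PWIT, and union-bound over resampled edges inside and on the boundary of that structure. The paper's local structure is the set of descending paths $D_v(K_{n,n},s)$, which is determined by the weights alone --- this makes your two asserted steps (that the partner is a function of the local structure, and the uniform-in-$n$ size bound) clean to prove, via Propositions~\ref{prop:stabledescending} and~\ref{prop:noinfinitepaths} --- but it is the same object, in substance, as your exploration tree.
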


While a small perturbation of the edge costs will leave the stable matching largely intact, it turns out that the most expensive edges of the matching, on the contrary, will be replaced with high probability. For $m\ge1$ and $\vep\in [0,1]$, let $L_\vep(m)$ denote the the sets of vertices corresponding to the $m$ most expensive edges in the matching $S_{n,n}^\vep$ (that is, the last $m$ edges to be picked by the greedy algorithm).

\begin{theorem}[Sensitivity of the tail]\label{thm:tailsensitivity}
Let $m\ge1$ and $\vep\in(0,1]$ satisfy $m\ll\vep\log n$ as $n\to\infty$. Then, with high probability as $n \to \infty$, none of the edges in $L_0(m)$ remain in the matching after perturbation and the two sets $L_0(m)$ and $L_\vep(m)$ are hence disjoint.
\end{theorem}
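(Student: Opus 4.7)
The plan is to track the state of the greedy algorithm in its final $m$ stages and show that the perturbation swaps the identities of the $m$ most expensive matching edges. The starting point is a description of the late-stage weights: by the memoryless property of the exponential, at the stage with $k$ unmatched vertices per side the $k^2$ surviving edges have weights conditionally distributed as $X_{n-k}+\mathrm{Exp}(1)$, where the last picked weight satisfies $X_{n-k}\approx\sum_{j\ge k}1/j^2\asymp 1/k$. In particular, the $k$-th to last edge of $S_{n,n}^0$ has weight on the order $1/k$, so the top $m$ edges range in weight from $\asymp 1/m$ up to $O(1)$.

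The driving mechanism is that the perturbation injects cheap fresh edges at each late stage. At the stage with $k$ unmatched vertices per side, approximately $\vep k^2$ of the remaining edges are independently resampled to fresh $\mathrm{Exp}(1)$ variables, and the minimum of these fresh weights is of order $1/(\vep k^2)$. When $k\gtrsim 1/\vep$, this fresh minimum lies below the original algorithm's next pick $X_{n-k+1}\asymp 1/k$, so conditionally on both algorithms agreeing up to this stage, the perturbed algorithm picks a fresh edge and the two diverge; in particular, divergence must occur by the stage $k^\ast\asymp 1/\vep$, well before the final $m$ stages since $m\ll\vep\log n$.

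I would convert this observation into the theorem by studying the joint law of the matching costs $(c^0(v),c^\vep(v))$ for a typical vertex $v$. Both marginals $nc^0(v),nc^\vep(v)$ converge to a variable of density $1/(1+x)^2$ by Theorem \ref{theorem2}. The key quantitative input is the tail decorrelation
\[
\PP\bigl(nc^0(v)>y,\;nc^\vep(v)>y\bigr)=o(1/y),\qquad y\to\infty,
\]
which, applied with $y\asymp n/m$ and summed over the $2n$ vertices, yields $\E|L_0(m)\cap L_\vep(m)|\to 0$, so $L_0(m)$ and $L_\vep(m)$ are disjoint with high probability. A complementary estimate, showing that $c^\vep(v)$ is much smaller than $c^0(v)$ conditional on $v\in L_0(m)$, implies that in $S_{n,n}^\vep$ each vertex of $L_0(m)$ is matched much earlier than through its original partner and through a different edge, so none of the $m$ edges in $L_0(m)$ persists into $S_{n,n}^\vep$.

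The principal technical obstacle is establishing the tail decorrelation rigorously. Both $c^0(v)$ and $c^\vep(v)$ depend on the entire configuration through the recursive greedy procedure---equivalently, via the matching on Aldous' Poisson weighted infinite tree \cite{aldous01}---and the two configurations share a $1-\vep$ fraction of edge weights. The challenge is to show that the atypical configuration forcing $c^0(v)$ to be large is ``reset'' by the perturbation: resampling a $\vep$-fraction of $v$'s cheap neighbours together with their downstream matches, propagated through the PWIT recursion, must yield enough independence for the joint tail to factorise asymptotically. Calibrating this decoupling is what ultimately fixes the threshold $m\ll\vep\log n$ in the statement.
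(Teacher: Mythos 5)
Your proposal is a plan rather than a proof: the step you yourself identify as ``the principal technical obstacle'' --- the joint tail estimate $\PP(nc^0(v)>y,\,nc^\vep(v)>y)=o(1/y)$ and the ``complementary estimate'' that vertices of $L_0(m)$ are rematched much more cheaply after perturbation --- is essentially the entire content of the theorem, and you give no argument for it. Even granting it, the calibration is off: with $y\asymp n/m$ the bound $o(1/y)$ summed over $2n$ vertices gives $o(m)$, which tends to $0$ only for bounded $m$, whereas the theorem allows $m\to\infty$ as long as $m\ll\vep\log n$. Moreover your heuristic for where the threshold comes from is incorrect. Comparing the fresh minimum $1/(\vep k^2)$ with the next increment $\asymp 1/k$ gives a divergence scale $k^\ast\asymp 1/\vep$, and $m\ll\vep\log n$ does \emph{not} imply $m\ll 1/\vep$ (take $\vep=1$, $m=\sqrt{\log n}$). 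In any case, the two greedy processes diverging at some stage says nothing about the final $m$ edges being disjoint: by Theorem \ref{theorem4} the two matchings in fact agree on all but a small fraction of edges, so local divergence followed by resynchronisation is the typical behaviour, not the exception.

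The mechanism the paper uses, and which is absent from your proposal, is the following. Fix $\delta=(\log n)^{-3}$ and a vertex $u$ of $L_0(m)$. Among the roughly $\vep n$ resampled edges $(u,v)$ with $v\in L_0(m)^c$, the expected number whose new cost undercuts both $\delta$ and the matching cost of $v$ in the perturbed configuration with $u$ removed is
$$
\vep\sum_{v}\E\big[F\big(c_\vep^u(v)\wedge\delta\big)\big]\approx\vep\sum_{k=1}^{n}\E[Y_k]\approx\vep\log n ,
$$
i.e.\ it is governed by the \emph{total} matching cost from Theorem \ref{theorem1}. A second-moment (Chebyshev) bound shows each such count is at least $1$ with probability $1-O(1/(\vep\log n))$, and the union bound over the $2m$ vertices of $L_0(m)$ is exactly what produces the hypothesis $m\ll\vep\log n$; a separate comparison lemma ($Y_k\le Y_k^u\le Y_{k+1}$ for the matching with $u$ deleted) makes the displayed approximation rigorous, and a dynamic (Poisson-clock) construction converts ``some $v\in L_0(m)^c$ desires $u$ at cost below $\delta$'' into ``$u$ is matched by time $\delta$, necessarily via a new edge into $L_0(m)^c$''. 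Since all edges inside $L_0(m)$, original or resampled, have cost exceeding $\delta$ with high probability, none of the original $m$ most expensive edges survives and $L_\vep(m)$ is disjoint from $L_0(m)$. Without this (or an equivalent) quantitative argument, your outline does not constitute a proof.
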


%More precisely, the probability of this not happening is at most
%$$
%\frac{24}{\log n} + \frac{m^2}{(\log n)^3} + \frac{48m}{\vep \log n} = o(1).
%$$

Let $C_{n,n}^{\varepsilon}$ denote the total cost of the stable matching based on $\omega_\vep$. It will turn out that the most expensive edges are responsible for most of the randomness in the matching cost. A consequence of the above result is hence that the total cost of the matching is sensitive to the perturbation of the edge costs, in the sense that the matching costs before and after resampling are asymptotically uncorrelated. 

\begin{theorem}[Noise sensitivity of the matching cost]\label{thm:weightsensitivity}
For $\vep\log n\gg1$ we have that 
$$
\Corr\big(C_{n,n}^0,C_{n,n}^\vep\big)\to0\mbox{ as }n\to\infty.
$$
\end{theorem}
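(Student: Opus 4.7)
The plan is to prove $\Cov(C_{n,n}^0, C_{n,n}^\vep)\to 0$, which combined with Theorem~\ref{theorem1} (that gives $\Var(C_{n,n}^0)=\Var(C_{n,n}^\vep)\to\pi^2/6>0$) yields the vanishing correlation. The starting point is the decomposition of the matching cost that underpins Theorem~\ref{theorem1}: by the memoryless property applied to the greedy algorithm,
\begin{equation*}
C_{n,n}^\vep \;=\; \sum_{k=1}^n k\, Z_k^\vep,
\end{equation*}
where $Z_k^\vep \sim \textup{Exp}(k^2)$ are jointly independent, $k$ being the number of unmatched vertices on each side at the step where $Z_k^\vep$ is revealed (i.e., step $n-k+1$ of the greedy algorithm). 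Consequently $\Var(C_{n,n}^\vep)=\sum_{k=1}^n k^{-2}$ is concentrated in the small-$k$ terms, i.e., in the last (and most expensive) few edges produced by the algorithm.

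Write $C_{n,n}^\vep = T_m^\vep + B_m^\vep$, where $T_m^\vep$ is the sum of the top $m$ matching costs and $B_m^\vep$ the remainder. A direct computation from the decomposition above gives $\Var(B_m^\vep) = O(1/m)$ uniformly in $n$, while $\Var(T_m^\vep) \to \pi^2/6$ as $m\to\infty$. By Cauchy--Schwarz,
\begin{equation*}
\Cov(C_{n,n}^0, C_{n,n}^\vep) \;=\; \Cov(T_m^0, T_m^\vep) + O(m^{-1/2}),
\end{equation*}
so it suffices to show $\Cov(T_m^0, T_m^\vep)\to 0$ for some $m=m(n)\to\infty$ with $m\ll\vep\log n$, which is possible since $\vep\log n\to\infty$ by hypothesis. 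Moreover, the identity $T_m^\vep = m X_{n-m}^\vep + \sum_{k=1}^m k Z_k^\vep$ together with $\Var(X_{n-m}^\vep) = O(m^{-3})$ shows that the threshold term of $T_m^\vep$ contributes only $O(m^{-1/2})$ to the covariance, reducing matters to bounding $\Cov\big(\sum_{k=1}^m k Z_k^0,\ \sum_{k=1}^m k Z_k^\vep\big)$.

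Now invoke Theorem~\ref{thm:tailsensitivity}: with $m$ as above, the vertex sets $L_0(m)$ and $L_\vep(m)$ are disjoint on an event $\cA$ of probability $1-o(1)$. On $\cA$, and conditional on the histories of both greedy processes up to the step leaving $m$ unmatched pairs (which determines $L_0(m)$, $L_\vep(m)$, $X_{n-m}^0$ and $X_{n-m}^\vep$), memorylessness identifies each $Z_k^0$ with $k\le m$ as a functional of fresh $\textup{Exp}(1)$ variables on the $m^2$ edges inside $L_0(m)$ carrying $\omega$-weights, and analogously each $Z_k^\vep$ as a functional on the $m^2$ edges inside $L_\vep(m)$ carrying $\omega_\vep$-weights. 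Since these two edge families are disjoint in $K_{n,n}$ and $\omega_\vep$ depends on $\omega$ only via its values on non-resampled edges, the two families of exponentials are conditionally independent; the contribution from $\cA^c$ is absorbed using a uniform second-moment bound on $\sum k Z_k^\vep$ together with the rate in Theorem~\ref{thm:tailsensitivity}. This yields $\Cov(T_m^0, T_m^\vep)\to 0$ and completes the proof. The hard part will be precisely this conditional-independence step: although memorylessness makes the decoupling heuristically clear, one must condition on the joint $\sigma$-algebra of both histories and both vertex sets before invoking it, and verify that the conditional laws of the relevant $Z_k$'s coincide with their unconditional $\textup{Exp}(k^2)$ marginals.
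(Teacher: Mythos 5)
Your strategy is essentially the paper's: split the cost into a bulk (the first $n-m$ edges, whose variance is $O(1/m)$) and a tail (the last $m$ edges, which carry essentially all of the variance), reduce via Cauchy--Schwarz to the covariance of the two tails, and decouple the tails using the sensitivity of the most expensive edges. The variance computations, the reduction to $\Cov(T_m^0,T_m^\vep)$, and the plan to absorb the bad event with second moments all match the paper's proof.

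The gap is in the decoupling step, and it sits exactly where you flagged it. Disjointness of $L_0(m)$ and $L_\vep(m)$ alone does not make the residual clocks inside $L_0(m)$ ``fresh'' once you condition on \emph{both} histories. Take a non-resampled edge $e=(u,v)$ with $u,v\in L_0(m)$. The $\omega$-history up to time $Y_{n-m}$ reveals $\omega(e)>Y_{n-m}$; but the $\omega_\vep$-history up to time $Y_{n-m}^\vep$ additionally reveals $\omega(e)=\omega_\vep(e)>\min\{s_u,s_v\}$, where $s_u,s_v$ are the times at which $u,v$ are matched in the perturbed process. Disjointness of the terminal sets only guarantees $\min\{s_u,s_v\}\le Y_{n-m}^\vep$, and whenever $\min\{s_u,s_v\}>Y_{n-m}$ the conditional law of $\omega(e)-Y_{n-m}$ is no longer $\textup{Exp}(1)$. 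Hence the conditional laws of your $Z_k^0$, $k\le m$, need not coincide with their unconditional marginals, and the identity in law with an independent copy fails as stated. The paper repairs this by introducing the stopping time $T$ at which the two unmatched-vertex subgraphs become (and remain) disjoint, and restricting to the event $Q=\{T\le\min\{Y_{n-m},Y_{n-m}^\vep\}\}$: on $Q$, every vertex of $L_0(m)$ is already matched in the perturbed process by time $Y_{n-m}$, which removes the extra conditioning. Showing $\PP(Q^c)=o(1)$ requires applying the tail-sensitivity machinery at the larger scale $7\ell$ with $\ell=\sqrt{\vep\log n}$, together with the concentration bounds of Lemma~\ref{lma:Y_reverse} to sandwich $T$ between $Y_{n-7\ell}$ and $Y_{n-\ell}$; invoking Theorem~\ref{thm:tailsensitivity} at scale $m$ alone does not deliver this. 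With that modification, the rest of your argument goes through as in the paper.
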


The study of noise sensitivity was initiated by Benjamini, Kalai and Schramm \cite{BKS} in the context of Boolean functions. The topic has since developed substantially, but results are still mainly restricted to Boolean functions. Theorem \ref{thm:weightsensitivity} is one of the first instances of noise sensitivity for a more general function (the matching cost). 

\subsection{Comparison with the minimal matching}\label{sec:minimal}

As mentioned above, the asymptotic total cost of the minimal matching on $K_{n,n}$ is a constant $\pi^2/6$, while for the stable matching it grows logarithmically with $n$ according to Theorem \ref{theorem1}. Indeed, the stable matching arises from a greedy algorithm that selects cheap edges in the early stages, but will pay a price for this in the later stages when more expensive edges have to be selected. The typical cost of an edge in the matching however is of the order $1/n$ in both matchings. For the stable matching, the density of the limiting typical edge cost $W$ on this scale is given by \eqref{typdens}, and for the minimal matching it is shown in \cite[Theorem 2]{aldous01} to equal
$$
h(x) = \frac{e^{-x}(e^{-x}-1+x)}{(1-e^{-x})^2}, \quad x \geq 0. 
$$
The distribution has an exponentially decaying tail for the minimal matching and a power law tail with infinite mean for the stable matching indicating that, also on the typical scale, the stable matching is more likely to produce edges with a large cost. 

At the other end of the spectrum, the expected total number of edges in $K_{n,n}$ with cost at most $x/n$ is given by $n^2\PP(\omega(e)\leq x/n)\sim xn$ for small $x$, and the expected number of edges in the matching with cost at most $x/n$ is given by $n\PP(W\leq x)$, which according to the given densities of $W$ scales as $xn$ for the stable matching and as $xn/2$ for the minimal matching for small $x$. The fraction of edges with a small weight on the typical scale that will be a part of the matching hence equals 1 for the stable matching and 1/2 for the minimal matching, so that the stable matching hence includes all but a vanishing fraction of the cheap edges on the typical scale, while the minimal matching uses only half of those edges. Being less greedy in this regime turns out to be beneficial for the minimal matching, since it helps to avoid the expensive edges created at the end of the algorithm by the stable matching.

The edge rank $R_n$ is shown in \cite[Theorem 3]{aldous01} to converge to a random variable $R$ with probability  function  $\PP(R=r)=2^{-r}$. In particular, the probability that the cheapest edge of a vertex is used is 1/2. In the stable matching, on the other hand, this probability is approximately 0.596 and the rank distribution has a power law tail with infinite mean. This again reflects the fact that the stable matching is more likely to use the very cheapest edges, but will in return include more edges with a large cost.

As for the robustness of the stable matching established in Theorem \ref{theorem4}, a related property, referred to as an asymptotic essential uniqueness (AEU) property, is established for the minimal matching in \cite[Theorem 4]{aldous01}. It is shown that, if a matching differs from the minimal one by a proportion at least $\delta$, then its cost is at least $\vep=\vep(\delta)$ larger than the minimal one. The minimal matching is hence unique in the sense that a matching with a cost close to the optimal one must to a large extent coincide with the minimal matching. 

\subsection{Outline of proofs}\label{ss:proof}

Write $Y_k$ for the cost of the edge selected in the $k$th step of the greedy algorithm. In the first step, the cost is the minimum of $n^2$ exponential variables with mean 1 and is hence Exp$(n^2)$-distributed. With the convention that $Y_0=0$, by the memoryless property of the exponential distribution, we can for $k\ge1$ write
\begin{equation}\label{yk}
Y_k=Y_{k-1}+X_k,
\end{equation}
where $X_k$ is the minimum of $(n-k+1)^2$ exponential variables with mean 1 and hence Exp$((n-k+1)^2)$-distributed. The total cost is obtained as
\begin{equation}\label{tk}
C_{n,n}=\sum_{k=1}^nY_k=\sum_{k=1}^n(n-k+1)X_k=\sum_{k=1}^nZ_k\quad\mbox{where }Z_k\sim \mbox{Exp}(k).
\end{equation}
Theorem \ref{theorem1} follows immediately from this expression, and Theorem \ref{theorem2} follows by analyzing the weight of the $U$th selected edge, where $U$ is uniform on $[n]=\{1,2,\ldots ,n\}$. 

Theorem \ref{theorem3} is proved by transferring the problem to a limiting object known as the Poisson Weighted Infinite Tree (PWIT), which is also the strategy used in \cite{aldous01} (there also the first two results are obtained from computations on the PWIT, since the algorithm to obtain the minimal matching is less explicit). As a preparation for this, we extend the concept of stable matchings to general (possibly infinite) graphs and adapt the greedy algorithm. We also explore connections between the stable matching and so-called descending paths, which are paths with strictly decreasing edge costs. These ideas have previously appeared in \cite{holmarper20}. To obtain the PWIT as a local limit of the weighted version of $K_{n,n}$, we transform the edge costs to the typical scale by multiplying them by $n$. We then show that the rank on $K_{n,n}$ converges in distribution to the rank on the PWIT, where the latter can be explicitly computed. 

Theorem \ref{theorem4} is proved by making use of the relation between the stable matching and descending paths. Specifically, changes in the stable matching arising from resampling a certain proportion of the edge costs can be estimated by aid of crude bounds on the set of descending paths emanating from a given vertex.

The results concerning sensitivity of the most expensive edges and the total matching cost are established by splitting the vetex set into two sets $L_0(m)$ and $L^c_0(m)$, corresponding to the $m$ most expensive and the $n-m$ cheapest edges of the original matching, respectively. Theorem \ref{thm:tailsensitivity} is proved by showing that, after resampling, every vertex in $L_0(m)$ is desired by many vertices in $L_0^c(m)$ and, with high probabiliy, the desire is reciprocated. This implies that the vertices in $L_0(m)$ are with high probabiliy matched to vertices in $L_0^c(m)$ after resampling. As for Theorem \ref{thm:weightsensitivity}, we observe that most of the matching cost is generated by the bulk of the matching, which turns out to be essentially deterministic, while most of the randomness comes from the last few, most expensive, edges. We then construct the original matching $S_{n,n}$ and the matching $S_{n,n}^{\vep}$ based on the perturbed configuration dynamically by adding edges at times prescribed by their costs. Most edges are the same in both matchings but, by Theorem~\ref{thm:tailsensitivity}, the last edges correspond to disjoint subgraphs and are therefore generated by independent times/costs. Sine this phase is responsible for most of the randomness in the matching, the correlation of the matching costs will be small.

\subsection{Results for the complete graph}\label{sec:complete}

Before proceeding with the proofs, we comment briefly on results for the stable matching on the complete graph $K_n$, where $n$ is assumed to be even. All our proofs extend, with very minor adjustments, to this case. For the total weight $C_n$ we obtain that 
$$
\frac{\E [C_n]}{\log n}\to 1/2 \,\, \mbox{  and  }	\,\mbox{Var}(C_n)\to \pi^2/8.
$$
The number of edges in the matching is $n/2$ on $K_n$ while it is $n$ on $K_{n,n}$, so the expected total matching cost is asymptotically the same in relation to the number of edges. This is proved by noting that, on $K_n$, the representation in \eqref{tk} is replaced by
\begin{equation}\label{eq:Kntot}
C_n=\sum_{k=1}^{n/2}(n/2-k+1)X'_k=\sum_{k=1}^{n/2}Z'_k
\end{equation}
where $X'_k\sim \mbox{Exp}\left(\binom{n-2k+2}{2}\right)$ and $Z'_k\sim \mbox{Exp}(2k-1)$. The centered total matching cost $C_n-\log n/2$ converges in distribution to a proper random variable also on $K_n$. However, perhaps somewhat surprisingly, the limiting distribution is not a Gumbel. We explain this in more detail after the proof of Theorem \ref{theorem1}. Our other results apply in identical formulations also on $K_n$, except that the normalization in Theorem \ref{theorem4} is $n/2$ (the number of edges) instead of $n$. As for Theorem \ref{theorem2}, the proof is identical, except that we need to work with $X_k'$ instead of $X_k$ and recall that there are $n/2$ instead of $n$ edges to choose from. Theorem \ref{theorem3} is proved by computing the rank on the limiting PWIT. It is well known that also the weighted graph $K_n$ converges locally to the PWIT (see Section \ref{sec:PWIT}) and the distribution of the rank is therefore the same. The proof of Theorem \ref{theorem4} applies verbatim on $K_n$, and so do the proofs of Theorem \ref{thm:tailsensitivity} and \ref{thm:weightsensitivity}, provided we again work with $X_k'$ instead of $X_k$ and recall that there are $n/2$ edges in total.

\subsection{Further work}

One natural question is to what extent our results generalize to other distributions of the edge costs. Some results will certainly be different, for instance the quantification of the matching cost and its fluctuations in Theorem \ref{theorem1} will be affected, as well as the explicit density of the typical matching cost in Theorem \ref{theorem2}. Note however that the stable matching is defined only through the relative ordering of the edge costs. This implies that, if the edge costs are transformed by a strictly increasing continuous function, then the stable matching does not change. Transforming the costs by a strictly decreasing function, on the other hand, yields a stable matching where expensive edges in the original configuration are preferred. The edges can then be relabelled by inverting their order, so that in particular the rank $R_n$ has the same meaning as above. Since Theorem \ref{theorem3} is about the relative ordering of the edges, it can hence be extended to all continuous cost distributions on $(0, \infty)$. Similarly, Theorem \ref{theorem4} is only concerned with the matching as a geometrical object and thus also extends to all continuous cost distributions on $(0, \infty)$. The proofs of Theorem \ref{thm:tailsensitivity} and Theorem \ref{thm:weightsensitivity} rely heavily on specific estimates for the exponential distribution and the memoryless property so would need to be revised for other distributions.

Another question is whether the decorrelation in Theorem \ref{thm:weightsensitivity} ceases to hold when instead $\vep \log n\ll 1$. We conjecture that this is indeed the case, so that there is hence a transition at $\vep\sim (\log n)^{-1}$: For $\vep\log n\gg 1$, the matching costs decorrelate, while for $\vep \log n \ll 1$ they do not.

\section{Proofs of Theorems \ref{theorem1} and \ref{theorem2}}

In this section we give the short proofs of Theorem \ref{theorem1} and Theorem \ref{theorem2}.

\begin{proof}[Proof of Theorem \ref{theorem1}]
Recall the expression \eqref{tk} for the total cost and note that the variables $\{Z_k\}_{k\geq 1}$ are independent. The expectation is given by
$$
\E[C_{n,n}]=\sum_{k=1}^n\E[Z_k]=\sum_{k=1}^n\frac{1}{k}\sim \log n
$$
and the variance by
$$
\mbox{Var}(C_{n,n})=\sum_{k=1}^n\mbox{Var}(Z_k)=\sum_{k=1}^n\frac{1}{k^2}\to \frac{\pi^2}{6}.
$$
To obtain the distributional limit, define $\widetilde{Z}_k=Z_k-\E[Z_k]=Z_k-1/k$ and $\widetilde{C}_{n,n}=\sum_{k=1}^n \widetilde{Z}_k$. The moment generating function of $\widetilde{C}_{n,n}$ is given by
$$
\Psi_{\tilde{C}_{n,n}}(t)=\prod_{k=1}^n\frac{1}{1-t/k}\,e^{-t/k}\to \Gamma(1-t)e^{-\gamma t} \quad \mbox{as }n\to\infty,
$$
with $\gamma\approx 0.577$ denoting the Euler Mascheroni constant, where the convergence follows from the expansion $\Gamma(t)=\frac{e^{\gamma t}}{t}\prod_{k=1}^\infty \frac{1}{1+t/k}e^{t/k}$ and the relation $\Gamma(1-t)=-t\Gamma(-t)$ for the Gamma function. The limit is recognized as the generating function of a Gumbel variable with location parameter $-\gamma$ and scale parameter $1$. Finally, note that $\widetilde{C}_{n,n}=C_{n,n}-\sum_{k=1}^n1/k$, where $\sum_{k=1}^n1/k\sim \log n$.
\end{proof}

Before proceeding with the proof of Theorem \ref{theorem2}, we comment on the distributional limit for the stable matching on $K_n$. The total cost $C_n$ is then given by \eqref{eq:Kntot}. Centering $Z_k'$, as in the proof of Theorem \ref{theorem1}, we obtain $\widetilde{Z}_k'$ and the corresponding sum $\widetilde{C}_n$ with moment generating function
$$
\Psi_{\tilde{C}_n}(t)=\prod_{k=1}^{n/2}\frac{1}{1-t/(2k-1)}\,e^{-t/(2k-1)}=
\prod_{k=1}^{n}\frac{1}{1-t/k}\,e^{-t/k} \left[\prod_{k=1}^{n/2}\frac{1}{1-t/2k}\,e^{-t/2k}\right]^{-1}.
$$
Using the same results for the Gamma function as in the proof of Theorem \ref{theorem1}, we obtain that the first product on the right-hand side converges to $\Gamma(1-t)e^{-\gamma t}$ while the second product converges to $\Gamma(1-t/2)e^{-\gamma t/2}$. Hence
$$
\Psi_{\tilde{C}_n}(t)\to \frac{\Gamma(1-t)}{\Gamma(1-t/2)}e^{\gamma t/2}\quad \mbox{as }n\to\infty.
$$
We conclude, as in the proof of Theorem \ref{theorem1}, that $C_n-\log n /2$ converges in distribution to a random variable with this generating function. However, the generating function does not correspond to a Gumbel distribution, and hence the limiting distribution is not Gumbel.

\begin{proof}[Proof of Theorem \ref{theorem2}]
Recall from Section \ref{ss:proof} that the cost of the edge selected in the $k$th step is given by $Y_k=\sum_{i=1}^kX_k$, where $X_k\sim$Exp$((n-k+1)^2)$. Let $U$ be uniform on $[0,1]$. Note that the matching cost $c(v)$ of vertex $v$ has the same distribution as the cost $Y_{\lceil Un\rceil}$ of a randomly chosen edge in the matching. To analyze the latter, note that, for $\alpha\in[0,1)$, we have that
$$
\E[Y_{\lceil \alpha n\rceil}]=\sum_{i=1}^{\lceil \alpha n\rceil}\frac{1}{(n-i+1)^2}\sim \int_1^{\alpha n}\frac{1}{(n-x)^2}dx\sim \frac{\alpha}{n(1-\alpha)}
$$
and 
$$
\mbox{Var}[Y_{\lceil \alpha n\rceil})=\sum_{i=1}^{\lceil \alpha n\rceil}\frac{1}{(n-i+1)^2}\sim \int_1^{\alpha n}\frac{1}{(n-x)^4}dx=O(1/n^3).
$$
Hence $nY_{\lfloor \alpha n\rfloor}$ converges in probability to $\alpha/(1-\alpha)$. Now fix $\vep>0$ and decompose
\begin{equation*}
\begin{array}{lll}
\PP\left(nY_{\lceil Un\rceil}\leq \frac{\alpha}{1-\alpha}\right) & = & \PP\left(nY_{\lceil Un\rceil}\leq \frac{\alpha}{1-\alpha} , U\leq \alpha-\vep\right)+ 
\PP\left(nY_{\lceil Un\rceil}\leq \frac{\alpha}{1-\alpha} , U\geq \alpha+\vep\right)\\
& &+\PP\left(nY_{\lceil Un\rceil}\leq \frac{\alpha}{1-\alpha} , U\in(\alpha-\vep,\alpha+\vep)\right).
\end{array}
\end{equation*}
The first term converges to $\PP(U\leq \alpha-\vep)=\alpha-\vep$, the second term converges to 0 and the last term is bounded from above by $\PP(U\in(\alpha-\vep,\alpha+\vep))=2\vep$. Sending $n\to\infty$ and $\vep\to 0$ yields that the limit equals $\alpha$. Hence $nY_{\lfloor Un\rfloor}$ converges to a random variable $W$ with a distribution function satisfying $F_W(\frac{\alpha}{1-\alpha})=\alpha$. The latter can be inverted to $F_W(x) = \frac{x}{1+x}$, which corresponds to the stated density.
\end{proof}

\section{Stable matchings, descending paths and the PWIT}

In this section we extend the definition of stable matchings to general weighted graphs, introduce the notion of descending paths and describe how stable matchings are related to such paths.  We then define the PWIT, which is a well-known infinite tree arising as local limit of $K_{n,n}$ with exponential weights. This will be useful in the next section, where Theorem \ref{theorem3} is proved by transferring the computations to the PWIT and Theorem \ref{theorem4} by exploiting the connection between the stable matching and descending paths. Some of these auxiliary results can be found in similar form in \cite{holmarper20}, and we present them here for completeness.

\subsection{Stable matchings and descending paths}

Consider a weighted graph $G=(V,E)$, with finite or countably infinite vertex set $V$, edge set $E$ and edge costs $\{\tau(e)\}_{e\in E}$ (random or deterministic).  A matching on $G$ is a subset $M\subset E$ of non-adjacent edges. The concepts of matched vertices, perfect matching, the partner $M(v)$ and matching cost $c(v)$ of a vertex $v$ are defined analogously as in Section \ref{sec:intro}. A matching is stable if
$$
\forall u,v\in V \mbox{ with }(u,v)\in E: (u,v)\not\in M\Rightarrow \tau(u,v)>\min\{c(u),c(v)\}.
$$
Note that, if $u$ and $v$ are neighbors in $G$ and $\tau(u,v)<\infty$, then $u$ and $v$ cannot both be unmatched in a stable matching. A stable matching may not exist and, if it does, it may not be unique. Sufficient conditions for existence and uniqueness involve the concept of descending paths. For a weighted graph $G=(V,E)$, a descending path is a weighted subgraph consisting of a sequence of adjacent edges $e_1, e_2,\ldots$ such that $\tau(e_1)>\tau(e_2)>\tau(e_3)>\ldots$.  The set of descending paths emanating from a given vertex $v\in V$ is denoted by $D_v(G)$.

The following proposition from \cite{holmarper20} gives conditions that guarantee the existence of a unique stable matching. We include a proof for completeness.

\begin{proposition}[Holroyd, Martin, Peres (2020)]\label{prop:existsunique}
Given a weighted graph $G = (V,E)$, there exists a unique stable matching $S(G)$ if
\begin{itemize}
\item[\rm{(i)}] the edge costs are finite and all distinct;
\item[\rm{(ii)}] for each vertex $v \in V$ and all finite $s>0$, the set of vertices connected to $v$ by an edge with weight less than $s$ is finite;
\item[\rm{(iii)}] there are no infinite descending paths. 
\end{itemize}
\end{proposition}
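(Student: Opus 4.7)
The plan is to extend the greedy algorithm from the introduction to the (possibly infinite) weighted graph setting. Call an edge $(u,v)$ of a subgraph $H$ of $G$ \emph{mutually minimum} if it is the cheapest edge at both $u$ and $v$ in $H$; the algorithm repeatedly identifies such edges, commits them to the matching, and deletes their endpoints together with all incident edges. The three conditions play complementary roles: (i) makes the cheapest edge at each vertex unique, (ii) ensures it exists, and (iii) is used to guarantee that any subgraph inheriting (i)--(iii) that contains at least one edge has a mutually minimum edge. Indeed, starting at any vertex with an incident edge and repeatedly following cheapest edges produces, after the first step, a sequence of edge weights that is strictly decreasing; either the sequence closes up at a mutually minimum edge, or it forms an infinite descending path, contradicting~(iii).

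Next, any mutually minimum edge $(u,v)$ must lie in every stable matching $M$: otherwise the partners of $u$ and $v$ in $M$ (if they exist) are reached by strictly more expensive edges, so $c(u), c(v) > \tau(u,v)$ and $(u,v)$ is an unstable pair. This forcing argument is the source of uniqueness. For existence, iterate: set $G_0 := G$, let $M_\alpha$ be the set of all mutually minimum edges of $G_\alpha$, and let $G_{\alpha+1}$ be the subgraph obtained from $G_\alpha$ by deleting the endpoints of $M_\alpha$; at limit ordinals $\lambda$, set $G_\lambda := \bigcap_{\alpha < \lambda} G_\alpha$. Conditions (i)--(iii) are preserved under vertex deletion and under such intersections, so mutually minimum edges continue to exist as long as any edges remain, and the process must terminate at some ordinal $\alpha^*$ for which $G_{\alpha^*}$ is edge-free. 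Set $S(G) := \bigcup_\alpha M_\alpha$; because mutually minimum edges are automatically vertex-disjoint (by (i)), this is a matching, and by the forcing argument applied at each stage any stable matching must equal it.

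Finally I would check that $S(G)$ is itself stable. Any edge $(u,v) \notin S(G)$ is removed at some stage of the iteration since $G_{\alpha^*}$ has no edges, which means at least one of its endpoints gets matched. Let $u$ be the first of the two to be matched, say at stage $\alpha$ to some partner $w$. Then $v$ is still present in $G_\alpha$, so $(u,v)$ is an available edge at $u$ in $G_\alpha$, and $(u,w)$ being cheapest at $u$ in $G_\alpha$ forces $c(u) = \tau(u,w) < \tau(u,v)$; hence $(u,v)$ is not unstable. The main obstacle is the book-keeping needed for the transfinite iteration — in particular, verifying that condition~(iii) survives at limit ordinals (which is immediate, since any descending path in $G_\lambda$ is also a descending path in $G$) and confirming that the iteration really does exhaust every edge so that the stability check above can be applied to any $(u,v) \notin S(G)$.
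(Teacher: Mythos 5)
Your argument is correct and is built on the same general greedy algorithm as the paper (simultaneously match all mutual favourites, delete, iterate), but condition (iii) enters in a genuinely different place, so the two proofs are worth contrasting. The paper never argues that the residual graph is eventually edge-free; it invokes (iii) through an alternating-chain argument between two hypothetical stable matchings: if a vertex left unmatched by the algorithm were matched in some stable matching, chasing the successive partners $u_1,u_2,u_3,\ldots$ back and forth between the two matchings forces an infinite descending path. You instead use (iii) to prove the auxiliary lemma that any residual graph satisfying (i)--(iii) and containing an edge has a mutually minimum edge (follow cheapest edges: the walk either closes up or descends forever), run the iteration transfinitely until the residual graph is edge-free, and then read off both stability and uniqueness from edge-exhaustion. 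Your route buys an explicit verification that $S(G)$ is actually stable -- in particular it rules out two adjacent forever-unmatched vertices, a point the paper's chain argument takes for granted since it presupposes that the algorithm's output is itself stable -- at the price of the transfinite bookkeeping, which you handle correctly since (i)--(iii) pass to subgraphs and the strictly decreasing sequence of graphs must stabilise. Two details you should still spell out: the forcing argument at a stage $\alpha>0$ needs the inductive hypothesis that all edges forced at earlier stages already belong to the competing stable matching $M$, so that the $M$-partner of an endpoint of a mutually minimum edge of $G_\alpha$ must itself lie in $G_\alpha$ and the weight comparison applies; and the reverse containment $M\subseteq S(G)$ follows because any edge of $M\setminus S(G)$ would have both endpoints unmatched in $S(G)$ and would therefore survive into the edge-free graph $G_{\alpha^*}$.
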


\begin{proof}
We prove the proposition by giving an algorithm that produces the matching. The greedy algorithm described in Section \ref{sec:intro} works only on finite graphs, but the following algorithm is well-defined on any graph satisfying (i) and (ii):\medskip

\noindent \textbf{General greedy algorithm.} Two vertices $u$ and $v$ are called potential partners if $(u,v)\in E$, and two potential partners $u$ and $v$ are called mutual favourites if $(u,v)$ is the cheapest among all edges of $u$ and also the cheapest among all edges of $v$. Note that (i) and (ii) guarantee that any vertex has a unique cheapest edge. Match all mutual favourites and remove them from the graph. Then match all mutual favorites in the remaining graph. Repeat (possibly indefinitely) until no unmatched potential partners remain.\medskip

We claim that this produces a unique stable matching. As for the algorithm in Section \ref{sec:intro}, it follows from induction over the stages in the algorithm that all edges created must be included in any stable matching, since otherwise there would be an unstable pair. We also need to show that all vertices that are left unmatched by the algorithm are unmatched in any stable matching. To this end, let $v$ be a vertex that is unmatched in the matching $S$ arising from the algorithm, and assume there is another stable matching $S'$ where $v$ is matched, say to $u_1$. The fact that $v$ is not matched to $u_1$ in $S$ means that $u_1$ must be matched to a vertex $u_2$ with $\tau(u_1,u_2)<\tau(v,u_1)$ in $S$, since $v$ and $u_1$ would otherwise constitute an unstable pair in $S$. Similarly, the fact that $u_1$ is not matched to $u_2$ in $S'$ means that $u_2$ must be matched to a vertex $u_3$ with $\tau(u_2,u_3)<\tau(u_1,u_2)$, since $u_1$ and $u_2$ would otherwise constitute an unstable pair in $S'$. Iterating this leads to the conclusion that the graph must contain an infinite descending path. If no such path exists, there can hence not exist stable matchings where $v$ is matched.
\end{proof}

Descending paths turn out to have further importance for the stable matching. In essence, in order to find out if a vertex is matched in the stable matching and, if so, identify its partner, it is sufficient to investigate the set of descending paths emanating from the vertex. To formulate this, write $S(G)$ for the unique stable matching of a graph satisfying the assumptions of Proposition \ref{prop:existsunique}. Also, denote the set of descending paths including only edges with weight at most $s>0$ by $D_v(G,s)$.

\begin{proposition}\label{prop:stabledescending}
Consider a weighted graph $G = (V,E)$ satisfying conditions {\rm{(i)-(iii)}} of Proposition \ref{prop:existsunique} and fix a vertex $v \in V$. For any $s>0$, we have that 
\begin{equation}\label{s_inc}
S(D_v(G,s))\subseteq S(D_v(G))\subseteq S(G).
\end{equation}
Furthermore, if $v$ is unmatched in $S(D_v(G,s))$ for all $s>0$, then $v$ is unmatched in $S(G)$.
\end{proposition}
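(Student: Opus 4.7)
My plan is to exhibit natural candidates for the stable matchings of $D_v(G)$ and $D_v(G,s)$ as restrictions of $S(G)$, and then invoke the uniqueness provided by Proposition \ref{prop:existsunique}. Concretely, I will set
\[
M_s := S(G) \cap E(D_v(G,s)) \quad \text{and} \quad M_\infty := S(G) \cap E(D_v(G)),
\]
and aim to show that $M_s = S(D_v(G,s))$ and $M_\infty = S(D_v(G))$, from which the chain in \eqref{s_inc} will follow immediately.

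The first step will be to establish a closure property of descending paths: if $(u,w) \in E(D_v(G,s))$ and $(u,x)$ is an edge of $G$ with $\tau(u,x) < \tau(u,w)$, then $(u,x) \in E(D_v(G,s))$; and likewise for $D_v(G)$, omitting the weight cap. This is done by taking any descending path from $v$ that uses $(u,w)$, truncating it at an occurrence of $u$ (or at $v$ itself in the boundary case $u=v$), and appending the cheaper edge $(u,x)$ — both operations preserve the descending property, and since $\tau(u,x) < \tau(u,w) \leq s$ the extended path lies in $D_v(G,s)$. As a byproduct, $D_v(G)$ and $D_v(G,s)$ themselves satisfy (i)--(iii) — conditions (i) and (ii) are inherited, and (iii) is even stronger as every descending path in these subgraphs is a descending path in $G$ — so Proposition \ref{prop:existsunique} applies and their stable matchings are uniquely defined.

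The main step is to verify that $M_\infty$ is a stable matching of $D_v(G)$ (and $M_s$ of $D_v(G,s)$, by the identical argument). Being a subset of $S(G)$, it is certainly a matching. For stability, take $(u,w) \in E(D_v(G))$ with $(u,w) \notin M_\infty$. Then $(u,w) \notin S(G)$, so by stability of $S(G)$ at least one endpoint, say $u$, is matched in $S(G)$ to some $u'$ with $\tau(u,u') < \tau(u,w)$. The closure property yields $(u,u') \in E(D_v(G))$, hence $(u,u') \in M_\infty$, which forces the matching cost of $u$ relative to $M_\infty$ to equal $\tau(u,u') < \tau(u,w)$. Uniqueness from Proposition \ref{prop:existsunique} then identifies $M_\infty$ with $S(D_v(G))$, and similarly $M_s = S(D_v(G,s))$, giving the chain \eqref{s_inc}.

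For the final implication I will argue the contrapositive: if $v$ is matched in $S(G)$ to some $w$, then the single-edge path $v\to w$ is trivially a descending path in $G$ of weight $\tau(v,w)$, so $(v,w) \in E(D_v(G,s))$ for every $s \geq \tau(v,w)$; combined with $M_s = S(D_v(G,s))$ this gives $(v,w) \in S(D_v(G,s))$ for such $s$, so in particular $v$ is matched in $S(D_v(G,s))$. The only real obstacle in the plan is making the closure property fully watertight, which requires a short case analysis on whether $u$ appears before or after $w$ on the given descending path (and on whether $u = v$); once that is in place, the stability check and the appeal to uniqueness are essentially formal.
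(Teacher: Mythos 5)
Your argument is correct, and it takes a genuinely different route from the paper's. The paper proves the containments by contradiction: assuming a vertex is matched differently in $S(D_v(G,s))$ and $S(D_v(G))$, it alternately exploits stability of the two matchings to build an infinite descending chain, contradicting condition (iii) — essentially recycling the uniqueness argument of Proposition \ref{prop:existsunique} — and then derives the final claim about unmatched vertices by exhibiting an unstable pair. You instead identify the stable matching of the descending-path subgraph explicitly as the restriction $S(G)\cap E(D_v(G,s))$, by verifying that this restriction is itself stable (via the closure property that $D_v(G,s)$ is downward-closed under replacing an edge at a common endpoint by a cheaper one) and then invoking uniqueness on the subgraph, which does satisfy (i)--(iii) as you note. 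Your route yields a strictly stronger conclusion — equality with the restriction, not merely containment — from which both \eqref{s_inc} and the unmatched-vertex claim fall out immediately; the price is the short case analysis in the closure lemma (whether the descending path passes through $u$ before or after $w$, and the boundary case $u=v$), which you correctly flag and which does go through since the weights along a descending path strictly decrease. The paper's version is more self-contained in that it never needs the closure property, but yours is cleaner and arguably more informative.
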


%In particular, if $v$ is matched to a vertex $u$ in $S(D_v(G,s))$ for some $s>0$, then $v$ is matched to $u$ also in $S(G)$. 

\begin{proof}
Note that $D_v(G,s)\subseteq D_v(G)$. Assume that $S(D_v(G,s))\not\subseteq S(D_v(G))$. This means that there exists a vertex $v\in D_v(G,s)$ that is matched to a vertex $u_2$ in $S(D_v(G,s))$, but that is matched to another vertex $u_1$ (or possibly unmatched) in $S(D_v(G))$. We consider the case when $\tau(v,u_2)<\tau(v,u_1)$, so that $v$ has a higher matching cost in $S(D_v(G))$ (including also the possibility that $v$ is unmatched in $S(D_v(G))$), but the opposite case can be handled analogously. By definition of $D_v(G,s)$, no vertex that is matched in $S(D_v(G,s))$ prefers a vertex in $D_v(G,s)^c$ before its partner in $S(D_v(G,s))$, since edges to vertices in $D_v(G,s)^c$ are more expensive than edges to vertices in $D_v(G,s)$. It follows that the vertex $u_2$ must be matched in $S(D_v(G))$ to a vertex $u_3\in D_v(G,s)$ that is matched in $S(D_v(G,s))$ and with $\tau(u_2,u_3)<\tau(v,u_2)$, since $v$ and $u_2$ would otherwise constitute an unstable pair in $S(D_v(G))$. Let $u_4$ denote the partner of $u_3$ in $S(D_v(G,s))$. Then $\tau(u_3,u_4)<\tau(u_3,u_2)$, since otherwise $u_3$ and $u_4$ would be unstable in $S(D_v(G,s))$. Furthermore, as with $u_2$, the vertex $u_4$ must be matched in $S(D_v(G))$ to a vertex $u_5\in D_v(G,s)$ that is matched in $S(D_v(G,s))$ and with $\tau(u_4,u_5)<\tau(u_3,u_4)$, since $u_3$ and $u_4$ would otherwise constitute an unstable pair in $S(D_v(G))$. Iterating this leads to an infinite descending chain, which by assumption does not exist, and therefore a contradiction. We conclude that all vertices in $D_v(G,s)$ that are matched in $S(D_v(G,s))$ must be matched to the same partner in $S(D_v(G))$, that is, $S(D_v(G,s))\subseteq S(D_v(G))$. The other inclusion in \eqref{s_inc} follows from an analogous argument, noting that no vertex in $D_v(G)$ prefers a vertex in $D_v(G)^c$ before its partner in $S(D_v(G))$.

To show the last statement, assume that $v$ is unmatched in $S(D_v(G,s))$ for all $s>0$, but that $v$ is matched in $S(G)$, say to $u$. The matching cost of $v$ in $S(G)$ is $c(v)=\tau(v,u)$. By \eqref{s_inc}, the vertex $u$ cannot be matched to a different vertex in $S(D_v(G,c(v)))$, since it would then be matched to this other vertex also in $S(G)$. Hence both $u$ and $v$ are unmatched in $S(D_v(G,c(v)))$ and thus constitute an unstable pair. We conclude that $v$ cannot be matched in $S(G)$.
\end{proof}

\subsection{The PWIT}\label{sec:PWIT}

The Poisson Weighted Infinite Tree (PWIT) was first introduced in \cite{aldous92}. To describe it, consider first a root vertex with an infinite number of children. The edges from the root to the children are assigned weights according to a Poisson process with rate 1. Recursively, each child is then given an infinite number of new children and the edges to these new children are again assigned weights according to the arrival times of independent Poisson processes with rate 1. Continuing this procedure, leads to a rooted infinite tree $\mathcal{T}$ known as the PWIT. Formally, the PWIT is a rooted weighted graph with vertex set
$$
\mathcal{V}= \cup_{k=0}^\infty \mathbb{N}^k = \{0, 1,2, \dots, 11, 12, \dots, 21,22, \dots, 111, 112, \dots \},
$$
where $0$ is the root, and edges $(v,vj)$, for each $v \in \mathcal{V}$ and $j \in \mathbb{N}$, where $vj$ is referred to as a child of $v$. For $v \in \mathcal{V}$, let $(T_j^{\sss (v)})_{j \in \mathbb{N}}$ be the points (in increasing order) of a Poisson process on $\mathbb{R}_+$ with rate $1$. The cost of an edge $(v, vj)$ is given by $T_{vj} = T_j^{\sss(v)}$, where we write $T_{0j} = T_j$; see Figure \ref{fig:PWIT1}.

\begin{figure}[htbp]
\begin{center}
\begin{tikzpicture}[scale=1]
%root
\filldraw[black] (0,0) circle (2pt) node{};
%first offspring edges
\draw (-0.2,-0.1) -- (-3, -3);
\draw (-0.1,-0.1) -- (-1, -3);
\draw (0.0,-0.1) -- (1, -3);
\draw (0.1,-0.1) -- (3, -3);
\draw [dashed] (0.2,-0.1) -- (5, -3);
%first offspring nodes
\filldraw[black] (-3,-3) circle (2pt) node{};
\filldraw[black] (-1,-3) circle (2pt) node{};
\filldraw[black] (1,-3) circle (2pt) node{};
\filldraw[black] (3,-3) circle (2pt) node{};
%\filldraw[black] (5,-3) circle (2pt) node{};
%second offspring
\filldraw[black] (-5.5,-5) circle (2pt) node{};
\filldraw[black] (-4.5,-5) circle (2pt) node{};
\draw (-3.15,-3.1) -- (-5.5, -5);
\draw (-3.1,-3.1) -- (-4.5, -5);
\draw (-3.05,-3.1) -- (-3.5, -5);
\draw [dashed] (-3.0,-3.1) -- (-2.5, -5);
\filldraw[black] (-1.5,-5) circle (2pt) node{};
\filldraw[black] (-0.5,-5) circle (2pt) node{};
\draw (-1.05,-3.1) -- (-1.5, -5);
\draw (-1,-3.1) -- (-0.5, -5);
\draw (-0.95,-3.1) -- (0.5, -5);
\draw [dashed] (-0.9,-3.1) -- (1.5, -5);
%\filldraw[black] (-1.5,-5) circle (2pt) node{};
%\filldraw[black] (-0.5,-5) circle (2pt) node{};
%\filldraw[black] (0.5,-5) circle (2pt) node{};
%\filldraw[black] (1.5,-5) circle (2pt) node{};
% draw labels
\draw (0,0.4) node[] {$0$};
\draw (-2.1,-1.5) node[] {$T_1$};
\draw (-1.1,-1.9) node[] {$T_2$};
\draw (0.25,-2) node[] {$T_3$};
\draw (-3.45,-2.9) node[] {$1$};
\draw (-4.7,-3.9) node[] {$T_{11}$};
\draw (-1.45,-2.9) node[] {$2$};
\draw (-1.7,-3.9) node[] {$T_{21}$};
\draw (0.6,-2.9) node[] {$3$};
\draw (-5.9,-4.9) node[] {$11$};
\draw (-4.9,-4.9) node[] {$12$};
\draw (-1.9,-4.9) node[] {$21$};
\draw (-0.9,-4.9) node[] {$22$};
\end{tikzpicture}
\end{center}
\caption{\small The PWIT.}
\label{fig:PWIT1}
\end{figure}
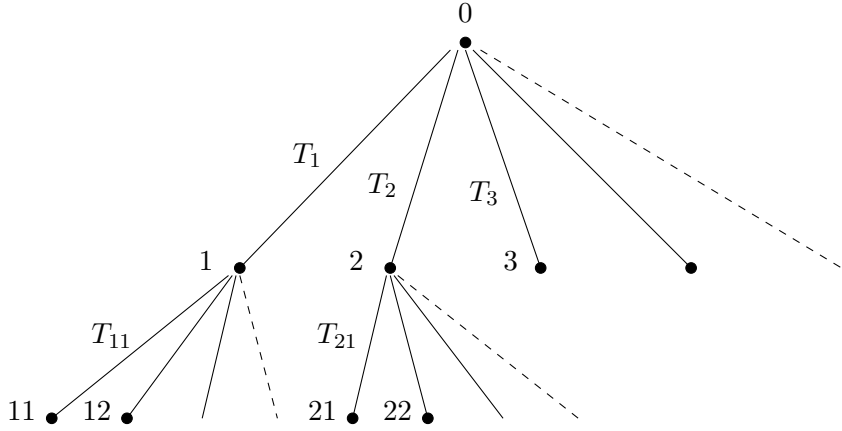

Now consider $K_{n,n}$ with edge costs $\{n\omega(e)\}_{e\in E_n}$, where $\{\omega(e)\}_{e\in E_n}$ are i.i.d.\ exponential with mean 1. With this scaling of the weights, the cheapest edge of a given vertex is Exp(1), the second cheapest is Exp(1)+Exp(1) etc, that is, the ordered weights are described by the arrival times of a rate 1 Poisson process. It is well known that $K_{n,n}$ with costs $\{n\omega(e)\}_{e\in E_n}$ converges to the PWIT in a certain sense. Specifically, write $\mathcal{G}_*$ for the set of rooted weighted graphs satisfying the assumption (ii) of Proposition \ref{prop:existsunique}. It can be shown that $\mathcal{G}_*$ is a complete separable metric space, and a notion of local weak convergence can be defined for probability measures on $\mathcal{G}_*$. A sequence of weighted graphs $\{G_n\}_{n\geq 1}$ converges locally to the PWIT if the following holds: Fix a radius $\rho > 0$ and, given a vertex $v$ of $G_{n}$, consider the subgraph consisting of all paths from $v$ with total cost at most $\rho$. Similarly, consider the subtree of the PWIT consisting of all paths from the root with total cost at most $\rho$. Then, for any given $\rho$, the graph $G_n$ can be coupled with the PWIT so that, with high probability as $n \to \infty$, there is an isomorphism between the two subgraphs which identifies $v$ with the root of the PWIT and which preserves the edge costs. In particular, this means that it is unlikely to encounter short cycles in $G_n$. We refer to \cite{aldous92,aldste04} for further details and a general framework for local weak convergence. Note that also the complete graph $K_n$ with exponential edge weights converges to the PWIT.

\begin{proposition}[Aldous (1992)] \label{prop:convKn}
The complete bipartite graph $K_{n,n}$ with i.i.d.\ exponential edge costs with mean $n$ converges locally to the PWIT:
$$
K_{n,n} \,\, \stackrel{d}{\longrightarrow} \,\,\mathcal{T} \qquad \mbox{as } n \to \infty.
$$
\end{proposition}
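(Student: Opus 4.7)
The plan is to prove local weak convergence via an explicit breadth-first exploration coupling. Recall that this means that for every fixed $\rho>0$, the rooted weighted ball $B_\rho(v)$ of $K_{n,n}$ (the subgraph consisting of all paths from the root $v$ with total cost at most $\rho$) converges in distribution, as a rooted weighted graph, to the analogous ball $B_\rho(0)$ of the PWIT. I would fix a root $v = v_1 \in V_n$ and jointly construct the two exploration processes so that they produce isomorphic $\rho$-balls with probability tending to $1$.

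The first step is the single-vertex statement: the $n$ edge costs leaving any fixed vertex of $K_{n,n}$ are i.i.d.\ Exp$(1/n)$ (mean $n$), and their ordered values converge jointly in distribution, on any bounded window, to the arrival times of a rate $1$ Poisson process on $[0,\infty)$. This follows from the standard representation of exponential order statistics: the $k$-th spacing of $n$ i.i.d.\ Exp$(1/n)$ variables is Exp$(\tfrac{n-k+1}{n})$, the spacings are independent, and each fixed-index spacing converges to Exp$(1)$ as $n\to\infty$. Thus the edges incident to $v_1$, ordered by cost and relabelled $1,2,\ldots$, have in the limit the joint law of the edges leaving the root of the PWIT.

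The second step is to iterate this along the exploration. After revealing the edges from $v_1$, we reveal for each neighbor $v'_j$ the remaining $n-1$ edges from $v'_j$ to $V_n\setminus\{v_1\}$; these are i.i.d.\ Exp$(1/n)$ and independent of everything previously revealed, so ordered by cost they again converge to an independent rate $1$ Poisson process, matching the PWIT subtree rooted at child $j$. Continuing inductively and revealing at each step the edges from a newly discovered vertex to the so-far unexplored vertices on the opposite side of the bipartition, one sees that the exploration of $K_{n,n}$ to any finite depth coincides in distribution, in the $n\to\infty$ limit, with the corresponding exploration of the PWIT.

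The main obstacle is that the exploration produces a tree only if no \emph{back-edge} (an edge between two already-discovered vertices, other than a tree edge) appears in $B_\rho(v_1)$, whereas such cycles are of course possible in $K_{n,n}$. This is controlled as follows. Since the PWIT ball $B_\rho(0)$ is almost surely finite, for any $\delta>0$ one may choose $K = K(\rho,\delta)$ such that $\PP(|B_\rho(0)| \le K) \ge 1-\delta$, and by the previous two steps the same bound transfers with high probability to the number of vertices discovered within cost $\rho$ in the $K_{n,n}$ exploration. Conditional on at most $K$ vertices being discovered, there are at most $K^2$ edges in $K_{n,n}$ with both endpoints discovered, and each such edge has cost Exp$(1/n)$ independent of the others among the not-yet-revealed edges, so the probability that any of them has cost at most $\rho$ is bounded by $K^2(1-e^{-\rho/n}) = O(K^2/n) \to 0$ as $n \to \infty$. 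A union bound combined with the exploration convergence established in the first two steps yields the claimed local weak convergence.
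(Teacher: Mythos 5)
The paper does not actually prove this proposition: it is quoted from Aldous (1992), with the surrounding discussion deferring all details to \cite{aldous92,aldste04}. Your exploration-coupling argument is the standard proof of this classical fact, and it is essentially correct: the spacing representation of exponential order statistics gives the single-vertex Poisson limit, disjointness of the revealed edge sets gives independence across exploration steps, and the back-edge/union-bound step is the right way to rule out short cycles. Two points deserve slightly more care than you give them. First, at depth $k$ of the exploration a newly discovered vertex has $n-j$ unexplored opposite-side neighbours for some $j=O(K)$ rather than exactly $n-1$; this changes nothing in the limit but should be said. Second, an edge between two discovered vertices is not quite ``unrevealed and Exp of mean $n$'': when its earlier endpoint $u$ was explored, that edge was either revealed to be too expensive or at least conditioned to exceed the remaining budget $\rho-d(u)$. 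The memoryless property rescues your bound (conditionally, such a cost is $\rho-d(u)$ plus a fresh exponential, so it still lies below $\rho$ with probability $O(1/n)$), but the argument should be phrased via this conditioning rather than treating the back-edges as fresh independent exponentials. With those adjustments your sketch is a complete and faithful rendition of the Aldous--Steele proof.
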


Next, we want to apply Proposition \ref{prop:existsunique} to establish the existence of a unique stable matching on the PWIT. To this end, we first recall from \cite[Lemma 4.8]{holmarper20} that the PWIT does not contain infinite descending paths. Here, $|G|$ denotes the number of vertices in a graph $G$.

\begin{proposition}[Holroyd, Peres, Martin (2020)]\label{prop:noinfinitepaths}
Consider $\mathcal{T}$ and its root $0$. For all $s>0$, we have that
\begin{equation}\label{eq:vv}
\E\left[|D_0(\mathcal{T},s)| \right] = e^s.
\end{equation}
 In particular, there are almost surely no infinite descending paths in $\mathcal{T}$.
%$$ \PP \left( |D_0(\mathcal{T},R)| > e^{2R} \right) \leq e^{-R}.$$
\end{proposition}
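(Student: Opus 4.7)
The plan is to exploit the branching self-similarity of the PWIT to set up an integral equation for $g(s) := \E[|D_0(\mathcal{T},s)|]$, solve it, and then derive the absence of infinite descending paths as a corollary via a union bound.

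First I would condition on the edges emanating from the root. Their weights $(T_j)_{j\ge 1}$ form a rate-$1$ Poisson process on $(0,\infty)$, and the subtrees hanging off the children are independent copies of the PWIT. A descending path from $0$ using only edges of weight at most $s$ is either the trivial (zero-edge) path at the root, or it begins with some edge $(0,j)$ having $T_j\le s$ and then continues as a descending path from $j$ in $\mathcal{T}_j$ whose next edge has weight strictly less than $T_j$. Using the independence and distributional identity of the subtrees with $\mathcal{T}$, this decomposition gives
\[
g(s)\;=\;1+\E\!\left[\sum_{j:\,T_j\le s}g(T_j)\right]\;=\;1+\int_0^s g(t)\,dt,
\]
where the second equality is Campbell's formula for the rate-$1$ Poisson process. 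The unique continuous solution with $g(0)=1$ is $g(s)=e^s$.

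For the second assertion, any infinite descending path from the root begins with some first edge of finite weight $w$, and hence is an element of $D_0(\mathcal{T},s)$ for every $s\ge w$; in particular its existence forces $|D_0(\mathcal{T},s)|=\infty$ for all sufficiently large $s$. Since $\E[|D_0(\mathcal{T},s)|]=e^s<\infty$ implies $|D_0(\mathcal{T},s)|<\infty$ almost surely for each fixed $s$, a union bound over $s\in\mathbb{N}$ rules out infinite descending paths from the root almost surely. The same argument applied to each subtree $\mathcal{T}_v$, together with a further countable union over vertices, upgrades this to the absence of infinite descending paths anywhere in $\mathcal{T}$.

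The computation itself is routine once the PWIT's recursive structure is exploited, so I do not expect a real obstacle. The main bookkeeping issue is pinning down the convention for what counts as a descending path (in particular whether the trivial zero-edge path at the root is included), since it is this choice that produces the constant term $1$ in the integral equation and hence the precise value $e^s$. A minor technical point is that the exchange of expectation and summation when applying Campbell's formula is immediate from Tonelli's theorem, since all summands are nonnegative.
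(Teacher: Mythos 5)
Your argument is correct in outline but takes a genuinely different route from the paper. The paper computes, for each $k\ge 0$, the expected number of descending paths of length $k$ from the root with all costs below $s$ directly: the costs along such paths are points of a rate-one Poisson process on $\RR^k$, and integrating over the ordered region $0<s_k<\dots<s_1<s$ gives $s^k/k!$; summing over $k$ (each vertex of $D_0(\mathcal{T},s)$ being the endpoint of exactly one such path) yields $e^s$. You instead exploit the self-similarity of the PWIT to derive the integral equation $g(s)=1+\int_0^s g(t)\,dt$ via the Mecke/Campbell formula, which is a clean and natural alternative. The one point you should tighten is the final step ``the unique continuous solution with $g(0)=1$ is $e^s$'': a priori $g$ takes values in $[0,\infty]$, and the function that equals $1$ at $0$ and $+\infty$ on $(0,\infty)$ also satisfies your equation in the extended sense, so the equation alone does not pin down $g$ without first showing $g$ is finite (equivalently locally integrable). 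The standard patch is to truncate by path length, i.e.\ set $g_k(s)$ to be the expected number of descending paths of length at most $k$, note $g_0\equiv 1$ and $g_k(s)=1+\int_0^s g_{k-1}(t)\,dt$, deduce $g_k(s)=\sum_{j\le k}s^j/j!$, and let $k\to\infty$ by monotone convergence --- which in effect recovers the paper's term-by-term computation. Your deduction of the almost-sure absence of infinite descending paths (finiteness of $|D_0(\mathcal{T},s)|$ for each fixed $s$, a union bound over integer $s$, and a further countable union over vertices using that each subtree is again a PWIT) matches the paper's and is fine.
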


\begin{proof}
For $k \geq 0$, consider descending paths from $0$ of length $k$ and with edge costs less than $s$. Each such path consists of $k$ edges with decreasing costs, where the first edge has cost $s_1<s$, the second edge has cost $s_2 <s_1$, and the $j$th edge has cost $s_j < s_{j-1}$, for $j=3, \dots, k$. The costs along paths of length $k$ can be represented by the points of a unit rate Poisson process on $\RR^k$ and, integrating over the region $0 < s_k < \dots < s_1 < s$, we obtain that the expected number of descending paths of length $k$ with costs less than $s$ is $\int_{0 <s_k < \dots < s_1 < s} \, ds_1 \cdots ds_k = \frac{s^k}{k!}$. Each vertex is the endpoint of at most one such path and thus the expression for $\E\left[|D_0(\mathcal{T},s)| \right]$ follows by summing over $k$.

Recall that $T_n$ is the cost of the edge from the root of $\mathcal{T}$ to its $n$th child. It follows from \eqref{eq:vv} that $|D_0(\mathcal{T},T_n)|$ is finite almost surely for any $n$, implying that $\mathcal{T}$ does not contain infinite descending paths.
\end{proof}

Given this, it is clear that $\mathcal{T}$ satisfies the assumptions of Proposition \ref{prop:existsunique} and we can therefore conclude that it has a unique stable matching.

\begin{proposition}\label{prop:smpwit}
There exists almost surely a unique stable matching $S(\mathcal{T})$ on the PWIT. 
\end{proposition}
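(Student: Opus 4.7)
The plan is simply to verify that the PWIT $\mathcal{T}$ almost surely satisfies each of the three hypotheses (i)--(iii) of Proposition \ref{prop:existsunique}, and then invoke that proposition to conclude the existence and uniqueness of $S(\mathcal{T})$.

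For (i), the edge costs in $\mathcal{T}$ are built from countably many independent rate-$1$ Poisson processes, one for each vertex's offspring edges. Each such process produces almost surely distinct, finite points, and since two independent Poisson processes on $\mathbb{R}_+$ share a common point with probability zero, a countable union argument shows that almost surely all edge costs across $\mathcal{T}$ are simultaneously finite and pairwise distinct.

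For (ii), fix a vertex $v \in \mathcal{V}$ and $s > 0$. The edges incident to $v$ consist of the (at most one) edge to its parent together with the edges $(v,vj)$ for $j \in \mathbb{N}$, whose costs form the points of a rate-$1$ Poisson process on $\mathbb{R}_+$. The number of children connected to $v$ by an edge of cost less than $s$ is thus Poisson($s$)-distributed, hence almost surely finite. Taking a countable union over $v \in \mathcal{V}$ and $s \in \mathbb{Q}_+$ (and using monotonicity in $s$) yields (ii) almost surely.

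Condition (iii) is precisely the content of Proposition \ref{prop:noinfinitepaths}, which has just been established. Intersecting the three almost-sure events, we find that $\mathcal{T}$ satisfies (i)--(iii) on an event of full probability, and Proposition \ref{prop:existsunique} then delivers the unique stable matching $S(\mathcal{T})$. There is no real obstacle here: the work has already been done in Propositions \ref{prop:existsunique} and \ref{prop:noinfinitepaths}, and the only subtlety is the standard countable-union step needed to upgrade ``almost sure for each fixed $v$ and $s$'' to ``almost sure simultaneously for all $v$ and $s$.''
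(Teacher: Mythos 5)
Your proposal is correct and follows exactly the route the paper takes: verify hypotheses (i)--(iii) of Proposition \ref{prop:existsunique} for the PWIT, using Proposition \ref{prop:noinfinitepaths} for (iii), and conclude. The paper states this in one line; you have merely spelled out the routine almost-sure verifications of (i) and (ii), which is fine.
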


Note that we do not yet know that $S(\mathcal{T})$ is perfect. This will follow from Proposition \ref{prop:PWITcost} below. First we note that the set of descending paths in $K_{n,n}$ can be coupled to the set of descending paths in $\mathcal{T}$. This will allow us to derive results for $S(K_{n,n})$ from results for $S(\mathcal{T})$ since, by Proposition \ref{prop:stabledescending}, the stable matching on a graph is determined by descending paths.

\begin{proposition}\label{prop:couplingpaths}
Consider $K_{n,n}$ with exponential edge costs with mean $n$, and fix a vertex $v$. For all $s>0$, there exists a coupling of $D_v(K_{n,n},s)$ and $D_0(\mathcal{T},s)$ such that the weighted graphs coincide with high probability as $n \to \infty$.
\end{proposition}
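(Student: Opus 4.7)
The plan is to build the coupling by a simultaneous breadth-first exploration of descending paths in both $K_{n,n}$ (with edge costs $n\omega(e)$) and $\mathcal{T}$. On the PWIT side, from each revealed vertex $u$ with incoming edge weight $t\le s$, the outgoing edges with weight strictly less than $t$ form the points of an independent rate-$1$ Poisson process on $[0,t]$; this is the structure already exploited in Proposition \ref{prop:noinfinitepaths}. On the $K_{n,n}$ side, the analogous step reveals the weights of the edges from $u$ to the presently unexplored vertices on the opposite part, each being an independent $\mathrm{Exp}(1/n)$ variable, of which we keep those at most $t$.

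The first step will be to control the total size of the exploration. By Proposition \ref{prop:noinfinitepaths}, $\E[|D_0(\mathcal{T},s)|]=e^s$, so for any $\eta>0$ I can fix $M=M(s,\eta)$ with $\PP(|D_0(\mathcal{T},s)|\ge M)<\eta$ via Markov's inequality. I will stop both explorations as soon as $M$ vertices have been uncovered on either side, and work on the high-probability event that the PWIT exploration does not overflow this cap.

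The second step will compare the two exploration rules at each round. Conditional on the history, the PWIT step produces a $\mathrm{Poisson}(t)$ number of offspring with positions uniform on $[0,t]$, while the $K_{n,n}$ step produces a $\mathrm{Bin}(n-|A|,\,1-e^{-t/n})$ number of offspring whose conditional positions are nearly uniform on $[0,t]$ for $n$ large. A standard Poisson--binomial coupling matches these two point processes on $[0,t]$ with total-variation error of size $O(s^2/n+M/n)$. In addition, the identity of each new $K_{n,n}$ endpoint must be assigned to a previously unseen vertex; since at most $M$ vertices have been consumed, a collision at any given new edge has probability at most $M/n$. Assigning the PWIT labels to uniformly chosen fresh $K_{n,n}$ labels then yields an isomorphism of rooted weighted graphs on the joint success event.

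The conclusion will follow from a union bound over the at most $M$ iterations, producing a cumulative failure probability of order $O\!\left(M^2(s^2+M)/n\right)=o_n(1)$. Hence the coupling succeeds with probability at least $1-\eta-o_n(1)$, and letting $\eta\to0$ gives the claim. The main delicacy is that coupling errors and collision events accumulate across iterations and that cycles in $K_{n,n}$ must be avoided; both difficulties are handled uniformly once the exploration is a priori truncated at $M$ vertices using the finite expectation from Proposition \ref{prop:noinfinitepaths}, after which a single union bound closes the argument.
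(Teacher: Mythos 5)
Your argument is correct, but it takes a genuinely different route from the paper. The paper's proof is two lines: by Proposition \ref{prop:noinfinitepaths} the set $D_0(\mathcal{T},s)$ is almost surely finite, hence contained in the set of paths from the root of total weight at most some finite $\rho$, and the coupling is then inherited from the local weak convergence $K_{n,n}\to\mathcal{T}$ stated in Proposition \ref{prop:convKn}, whose coupling makes the $\rho$-neighbourhoods coincide with high probability. You instead construct the coupling by hand, via a joint breadth-first exploration of descending paths, a Poisson--binomial comparison at each step, and collision/cycle bounds, with everything truncated at $M$ vertices using the same expectation bound $\E[|D_0(\mathcal{T},s)|]=e^s$. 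In effect you reprove the relevant instance of local weak convergence restricted to descending paths rather than quoting it. What your approach buys is self-containedness and an explicit, quantitative error rate of order $M^2(s^2+M)/n$; what the paper's approach buys is brevity, at the price of leaning on Proposition \ref{prop:convKn} as a black box. Two small points to keep in mind if you write this out in full: (a) a vertex of $K_{n,n}$ may in principle be reachable by descending paths through several incoming edges, so the exploration must reveal, from a vertex first discovered at level $t$, all edges cheaper than the most expensive incoming tree edge encountered so far --- on the no-collision event this ambiguity disappears, which is exactly why the collision bound must precede the identification of the two descending-path sets; and (b) the final passage from ``probability at least $1-\eta-o_n(1)$ for every $\eta$'' to ``with high probability'' should be made explicit, e.g.\ by letting $M=M_n\to\infty$ slowly. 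Neither point is a gap, just bookkeeping.
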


\begin{proof}
By Proposition \ref{prop:noinfinitepaths}, the set of descending paths $D_0(\mathcal{T},s)$ is contained in the set of paths from $0$ with total weight at most $\rho$ for some value of $\rho<\infty$. The claim hence follows from Proposition \ref{prop:convKn}. 
\end{proof}

Write $W_0$ for the matching cost of the root in the stable matching on the PWIT. We end this section by determining the distribution of $W_0$. Since $W_0$ is finite almost surely, it follows that the stable matching on the PWIT is perfect almost surely. This is proved in \cite[Section 3.2.1]{holmarper20}, but we give a different argument based on Theorem \ref{theorem2} and the connection between the stable matching and descending paths.

\begin{proposition}\label{prop:PWITcost}
We have that $W_0\stackrel{d}{=}W$, where the density of $W$ is given by \eqref{typdens}.
\end{proposition}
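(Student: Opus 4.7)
The plan is to combine the convergence of the typical matching cost from Theorem \ref{theorem2} with the descending-path coupling in Proposition \ref{prop:couplingpaths}, using Proposition \ref{prop:stabledescending} to relate the matching cost at a vertex to the set of descending paths emanating from it.

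First I would rescale the edge costs of $K_{n,n}$ by a factor $n$, replacing $\omega(e)$ by $n\omega(e)$. The stable matching is unchanged, since only the relative order of costs matters, but the matching cost of a fixed vertex $v$ becomes $nc(v)$. Theorem \ref{theorem2} then gives $nc(v)\stackrel{d}{\to}W$, and by Proposition \ref{prop:convKn} the rescaled graph converges locally to $\mathcal{T}$.

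The key step is the following event identity: for any weighted graph $G$ satisfying (i)--(iii) of Proposition \ref{prop:existsunique}, any vertex $v$ of $G$ and any $s>0$,
\begin{equation*}
\bigl\{v\text{ matched in }S(G)\text{ with cost}\le s\bigr\}=\bigl\{v\text{ matched in }S(D_v(G,s))\bigr\}.
\end{equation*}
For the forward direction, suppose $v$ is matched in $S(G)$ to $u$ with $\tau(v,u)\le s$. Then $(v,u)$ is a descending path of length one contained in $D_v(G,s)$. By the inclusion $S(D_v(G,s))\subseteq S(G)$ from Proposition \ref{prop:stabledescending}, in $S(D_v(G,s))$ neither $v$ nor $u$ can be matched to a partner other than $u$ and $v$ respectively; and they cannot both be unmatched, since the edge $(v,u)$ would then form an unstable pair in $D_v(G,s)$. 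Hence $v$ and $u$ are matched to one another in $S(D_v(G,s))$. The reverse direction uses the same inclusion: if $v$ is matched in $S(D_v(G,s))$ to some $u$, its partner coincides with the partner in $S(G)$, and the edge lies in $D_v(G,s)$, so $\tau(v,u)\le s$.

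Applying this identity both on the rescaled $K_{n,n}$ at $v$ and on $\mathcal{T}$ at the root, I would fix $s>0$ and invoke Proposition \ref{prop:couplingpaths} to couple $D_v(K_{n,n},s)$ and $D_0(\mathcal{T},s)$ so that the two weighted graphs coincide with high probability as $n\to\infty$. On this event their stable matchings agree, and the two events above therefore agree, so $\PP(nc(v)\le s)\to\PP(W_0\le s)$ for every $s>0$. Comparing with Theorem \ref{theorem2} yields $\PP(W\le s)=\PP(W_0\le s)$ for all $s\ge0$, and hence $W_0\stackrel{d}{=}W$.

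The main obstacle is the event identity above: it requires threading the inclusion from Proposition \ref{prop:stabledescending} together with the stability property to rule out the possibility that $v$ is matched in $S(G)$ but not in the descending subgraph (or vice versa). Once this identification is in hand, the transfer to the PWIT is a routine application of the local weak convergence of descending subgraphs.
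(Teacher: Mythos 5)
Your proof is correct and follows essentially the same route as the paper: both transfer the limit of $nc(v)$ from Theorem \ref{theorem2} to the PWIT via Proposition \ref{prop:stabledescending} and the coupling of descending-path subgraphs in Proposition \ref{prop:couplingpaths}. Your event identity $\{v\text{ matched in }S(G)\text{ with cost}\le s\}=\{v\text{ matched in }S(D_v(G,s))\}$ neatly packages the two inequalities that the paper proves separately (and lets you work at fixed $s$ rather than through the paper's iterated limit in $n$ and then $s$), but the underlying argument is the same.
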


\begin{proof} 
Recall that $c(v)$ denotes the matching cost of vertex $v$ in $K_{n,n}$ equipped with i.i.d.\ exponential edge weights with mean 1. Write $\tilde{c}_n(v):=nc(v)$ for the cost when the weights are scaled to have mean $n$. By Theorem \ref{theorem2}, the cost $\tilde{c}_n(v)$ converges in distribution to a proper random variable $W$ with density \eqref{typdens}. The claim hence follows from the uniqueness of the limiting distribution if we show that $\tilde{c}_n(v)$ converges in distribution to $W_0$. To this end, let $\tilde{c}_n^{\sss (s)}(v)$ denote the analogue of $\tilde{c}_n(v)$ in the stable matching on $D_v(K_{n,n},s)$ (based on exponential weights with mean $n$) and, similarly, let $W_0^{\sss (s)}$ be the analogue of $W_0$ on $D_0(\mathcal{T},s)$. By Proposition \ref{prop:stabledescending}, the root is matched to a vertex $u$ in $S(\mathcal{T})$ if and only if it is matched to $u$ in $S(D_0(\mathcal{T},s))$ for large $s$. Furthermore, by Proposition \ref{prop:couplingpaths}, the graphs $D_0(\mathcal{T},s)$ and $D_v(K_{n,n},s)$ can be coupled so that they coincide with high probability as $n\to\infty$. Hence
\begin{equation}\label{eq1}
\PP(W_0>x)=\lim_{s\to\infty}\PP(W_0^{\sss(s)}>x)=\lim_{s\to\infty}\lim_{n\to\infty}\PP(\tilde{c}_n^{\sss(s)}(v)>x).
\end{equation}
If follows from Proposition \ref{prop:stabledescending} applied to $K_{n,n}$ that $\tilde{c}_n^{\sss(s)}(v)\geq \tilde{c}_n(v)$ (with equality if $\tilde{c}_n^{\sss(s)}(v)<\infty$, that is, if $v$ is matched in  $S(D_v(K_{n,n},s))$). Since $\tilde{c}_n(v)$ does not depend on $s$, we obtain that
\begin{equation}\label{eq2}
\lim_{s\to\infty}\lim_{n\to\infty}\PP(\tilde{c}_n^{\sss(s)}(v)>x)\geq \lim_{n\to\infty}\PP(\tilde{c}_n(v)>x).
\end{equation}
To get the reverse inequality, note that, on the event $\{\tilde{c}_n(v)\leq s\}$, we have that $\tilde{c}_n^{\sss(s)}(v)=\tilde{c}_n(v)$, since $v$ is then matched in $S(D_v(K_{n,n},s))$. We can thus bound
$$
 \begin{array}{lll}
\PP(\tilde{c}_n^{\sss(s)}(v)>x) & =&  \PP(\tilde{c}_n^{\sss(s)}(v)>x\cap \tilde{c}_n(v)\leq s) + \PP(\tilde{c}_n^{\sss(s)}(v)>x\cap \tilde{c}_n(v)> s) \\
& \leq & \PP(\tilde{c}_n(v)>x)+\PP(\tilde{c}_n(v)> s).
\end{array}
$$
If follows from Theorem \ref{theorem2} that $\lim_{s\to\infty}\lim_{n\to\infty}\PP(\tilde{c}_n(v)> s)=\lim_{s\to\infty}\PP(W>s)=0$
and hence
\begin{equation}\label{eq3}
\lim_{s\to\infty}\lim_{n\to\infty}\PP(\tilde{c}_n^{\sss(s)}(v)>x)\leq \lim_{n\to\infty}\PP(\tilde{c}_n(v)>x).
\end{equation}
Combining \eqref{eq1}-\eqref{eq3} we conclude that $\tilde{c}_n^{\sss(s)}(v)\stackrel{d}{\to}W_0$, as desired.
\end{proof}

\section{Proofs of Theorems \ref{theorem3} and Theorem \ref{theorem4}}

In this section, we prove Theorem \ref{theorem3} and Theorem \ref{theorem4}. Consider a vertex $v\in V_n$ in $K_{n,n}$ and order the edges emanating from $v$ according to cost, so that $e_1$ is the cheapest edge and $e_n$ the most expensive one. Recall that $R_n$ denotes the rank of the edge used by $v$ in the stable matching, that is, $R_n=m$ if $e_m\in S_{n,n}$. Write $R$ for the analogous quantity on the PWIT:
$$
R=\left\{
        \begin{array}{ll}
        m & \mbox{if $(0,m)\in S(\mathcal{T})$};\\
	\infty & \mbox{if $0$ is not matched in $S(\mathcal{T})$}.
        \end{array}
            \right.
$$
Theorem \ref{theorem3} is a consequence of the following two propositions.

\begin{proposition}\label{prop:rank_conv}
We have that $R_n\stackrel{d}{\to} R$ as $n\to\infty$. 
\end{proposition}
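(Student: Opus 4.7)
The plan is to deduce the convergence of $R_n$ from the coupling of descending paths (Proposition~\ref{prop:couplingpaths}), together with the descending-path characterization of the stable matching (Proposition~\ref{prop:stabledescending}) and the a.s.\ finiteness of the root's matching cost $W_0$ (Proposition~\ref{prop:PWITcost}). A first observation is that the rank depends only on the relative ordering of the edges incident to $v$, and is therefore unaffected by the rescaling of the edge weights by the factor $n$. We may therefore work throughout with the scaled graph whose local limit is the PWIT.

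Fix $m \in \mathbb{N}$ and $\delta > 0$. I would choose $s$ large enough that $\PP(W_0 > s) < \delta$. Since $W_0 \stackrel{d}{=} W$ by Proposition~\ref{prop:PWITcost} and $\tilde{c}_n(v) := n c(v) \stackrel{d}{\to} W$ by Theorem~\ref{theorem2}, for all sufficiently large $n$ we also have $\PP(\tilde{c}_n(v) > s) < 2\delta$. On the event $\{\tilde{c}_n(v) \leq s\}$ the vertex $v$ is matched in $S(D_v(K_{n,n},s))$, so by Proposition~\ref{prop:stabledescending} its partner in $S(K_{n,n})$ coincides with its partner in $S(D_v(K_{n,n},s))$; the analogous statement holds for the root of $\mathcal{T}$ on $\{W_0 \leq s\}$. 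By Proposition~\ref{prop:couplingpaths} we may couple so that $D_v(K_{n,n},s) = D_0(\mathcal{T},s)$ as rooted weighted graphs with probability $1-o(1)$, and on this coincidence the restricted stable matchings are identical; in particular the partners of $v$ and of the root are identified.

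It remains to identify the ranks themselves. Any neighbor of $v$ in $K_{n,n}$ joined by an edge of cost at most $s$ is, taken alone, a descending path of length one starting at $v$, and thus belongs to $D_v(K_{n,n},s)$; the analogous fact holds at the root of $\mathcal{T}$. Under the coupling, the edge of cost $c \leq s$ incident to $v$ corresponds to the edge of cost $c$ incident to the root, and in particular the $k$th cheapest neighbor of $v$ (for any $k$ for which this cost is at most $s$) corresponds to the $k$th child of the root. Hence, on the good event, $R_n = R$, and letting $\delta \to 0$ yields $\PP(R_n = m) \to \PP(R = m)$. The main subtlety — and the reason one cannot conclude directly from the local coupling — is that the rank is a global functional of all $n$ edges at $v$, whereas the local coupling only identifies the cheap edges; the truncation via $s$ combined with Proposition~\ref{prop:stabledescending} is precisely what bridges the local and global information.
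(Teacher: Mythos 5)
Your proof is correct and follows essentially the same route as the paper: truncate to the descending-path neighbourhood $D_v(K_{n,n},s)$, use Proposition~\ref{prop:stabledescending} to localize the partner, couple with $D_0(\mathcal{T},s)$ via Proposition~\ref{prop:couplingpaths}, and control the truncation error with Theorem~\ref{theorem2}; the paper packages this by invoking the argument of Proposition~\ref{prop:PWITcost} verbatim, whereas you additionally spell out the (correct) observation that on the good event the rank itself, not just the partner, is identified under the coupling.
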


\begin{proposition}\label{prop:rank_PWIT}
The rank $R$ on the PWIT satisfies {\rm{(i)}} and {\rm{(ii)}} of Theorem \ref{theorem3}.
\end{proposition}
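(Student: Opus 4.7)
The plan is to use the recursive structure of the PWIT to describe $R$ and reduce both parts to explicit integrals. Enumerate the root's children $1,2,\ldots$ in order of increasing edge cost $T_1 < T_2 < \cdots$ (the arrival times of a rate-$1$ Poisson process), and let $\mathcal{T}^{(j)}$ be the subtree rooted at child $j$; by the construction of the PWIT these are i.i.d.\ copies of $\mathcal{T}$, independent of $\{T_j\}$. Writing $W^{(j)}$ for the matching cost of child $j$ in $S(\mathcal{T}^{(j)})$, Proposition~\ref{prop:PWITcost} gives that $\{W^{(j)}\}_{j \geq 1}$ are i.i.d.\ with distribution function $F_W(x) = x/(1+x)$, and independent of $\{T_j\}$. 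The first step is to show that $R$ equals the smallest index $m$ with $W^{(m)} > T_m$; such an $m$ exists almost surely, since otherwise the root would be unmatched, contradicting Proposition~\ref{prop:PWITcost}. To prove this I would invoke uniqueness of the stable matching (Proposition~\ref{prop:existsunique}) by verifying that the matching pairing the root with child $m$ and using $S(\mathcal{T}^{(j)})$ inside each subtree is stable: an unused root--child edge $(0,j)$ with $j<m$ satisfies $T_j > W^{(j)} = c(j)$, and with $j>m$ satisfies $T_j > T_m = c(0)$, while all other non-matching edges lie inside some $\mathcal{T}^{(j)}$ and are stable by definition.

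Given this characterization and the independence of $\{W^{(j)}\}$ from $\{T_j\}$,
$$
\PP(R \geq r) = \PP\big(W^{(j)} < T_j \text{ for } 1 \leq j \leq r-1\big) = \E\Big[\prod_{j=1}^{r-1}\frac{T_j}{1+T_j}\Big].
$$
Specializing to $r=2$ handles (i): $\PP(R=1) = \E[1/(1+T_1)] = \int_0^\infty e^{-t}/(1+t)\,dt$, which equals $e\int_1^\infty e^{-s}/s\,ds$ after the substitution $s=1+t$. For general $r$, I would condition on $T_r = t$: since given $T_r = t$ the variables $T_1,\ldots,T_{r-1}$ are distributed as the order statistics of $r-1$ i.i.d.\ $\mathrm{Uniform}(0,t)$ random variables, the conditional expectation evaluates to $(1 - \log(1+t)/t)^{r-1}$. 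Integrating against the $\mathrm{Gamma}(r,1)$ density of $T_r$ and then applying the substitution $u = t - \log(1+t)$, which yields $e^{-t}\,dt = e^{-u}/t(u)\,du$ where $t(u)$ is the increasing inverse on $[0,\infty)$, transforms the expression into
$$
\PP(R \geq r) = \E\Big[\frac{1}{t(U_r)}\Big],\qquad U_r \sim \mathrm{Gamma}(r,1).
$$

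To conclude (ii), it suffices to show $\E[r/t(U_r)] \to 1$. Since $t(u) \geq u$, the sandwich $r/t(U_r) \leq r/U_r$ holds, and a direct computation with the Gamma density gives $\E[r/U_r] = r/(r-1) \to 1$ and $\E[(r/U_r)^2] = r^2/((r-1)(r-2)) \to 1$, so the family $\{r/t(U_r)\}_r$ is bounded in $L^2$ and hence uniformly integrable. Meanwhile $U_r/r \to 1$ in probability by the law of large numbers, and $t(u)/u \to 1$ as $u \to \infty$ (which follows from $t = u + \log(1+t)$ giving $t = u + O(\log u)$), so $r/t(U_r) \to 1$ in probability. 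Convergence in probability together with uniform integrability yields convergence of expectations, whence $r\,\PP(R \geq r) \to 1$.

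The main obstacle is the final asymptotic analysis: extracting $\PP(R \geq r) \sim 1/r$ from the integral representation requires combining the leading behavior $t(u) \sim u$ of the implicit inverse with a uniform integrability argument controlling negative moments of $U_r$, where some care is needed because both the integrand and the underlying measure change with $r$.
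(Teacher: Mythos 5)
Your proposal is correct, and its skeleton (the recursive characterization $R=\min\{j: W^{(j)}\ge T_j\}$ with $\{W^{(j)}\}$ i.i.d.\ copies of $W$ independent of $\{T_j\}$, then an explicit integral for (i)) coincides with the paper's. For part (ii), however, you take a genuinely different and in fact tighter route. The paper passes to a thinned (inhomogeneous Poisson) process and writes $\PP(R>r)=\E\big[e^{-\int_0^{T_r}(1-F_W(t))\,dt}\big]=\E[1/(1+T_r)]$, which is really only an asymptotic surrogate for the exact quantity, and then reads off $\sim 1/r$ from the concentration of $T_r$ around $r$ without spelling out an integrability argument. You instead compute the exact conditional expectation $\big(1-\log(1+t)/t\big)^{r-1}$ given $T_r=t$ (using that $T_1,\dots,T_{r-1}$ are then uniform order statistics), and your substitution $u=t-\log(1+t)$ does indeed give $e^{-t}\,dt=e^{-u}\,du/t(u)$, yielding the clean identity $\PP(R\ge r)=\E[1/t(U_r)]$ with $U_r\sim\mathrm{Gamma}(r,1)$; the concluding $L^2$-boundedness/uniform-integrability argument for $r/t(U_r)\to 1$ is complete and correct ($\E[(r/U_r)^2]=r^2/((r-1)(r-2))$ for $r\ge3$). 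This buys a fully rigorous proof of (ii) where the paper's is somewhat heuristic, at the cost of handling an implicitly defined inverse $t(u)$.

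One small caveat on your stability verification: the matching you describe cannot literally use $S(\mathcal{T}^{(m)})$ inside the subtree of the chosen child $m$, since $m$ is matched to the root; one must instead use the stable matchings of the subtrees of $m$'s children and check the edges from $m$ downward (for which $T_m\le W^{(m)}$ is exactly what is needed). This is the same recursive point the paper elides entirely when asserting $R=\min\{j:T_j\le W_j\}$, so it is not a gap relative to the paper's own level of detail, but it deserves a sentence if written out.
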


\begin{proof}[Proof of Proposition \ref{prop:rank_conv}]
This follows from the same arguments that were used to show that $\tilde{c}_n(v)\to W_0$ in the proof of Proposition \ref{prop:PWITcost}. To see this, first note that scaling the edge costs does not affect the ranking of the edges. We can thus use the scaled edge weights $\{n\omega(e)\}_{e\in E}$, where $\{\omega(e)\}_{e\in E}$ are the original i.i.d.\ edge weights. Let $R_n^{\sss (s)}$ and $R^{\sss (s)}$ denote the analogues of $R_n$ and $R$ in the stable matchings on $D_v(K_{n,n},s)$ and $D_v(\mathcal{T},s)$ respectively, that is, $R_n^{\sss (s)}=m$ if $e_m\in S(D_v(K_{n,n},s))$ and $R^{\sss (s)}=m$ if $e_m\in S(D_v(\mathcal{T},s))$. The proof that $\tilde{c}_n(v)\to W_0$ in the proof of Proposition \ref{prop:PWITcost} can now be applied verbatim with $\tilde{c}_n(v)$ and $W_0$  replaced by $R_n$ and $R$, and with $\tilde{c}_n^{\sss(s)}(v)$ and  $W_0^{\sss(s)}$ replaced by  $R_n^{\sss(s)}$ and $R^{\sss(s)}$.
\end{proof}

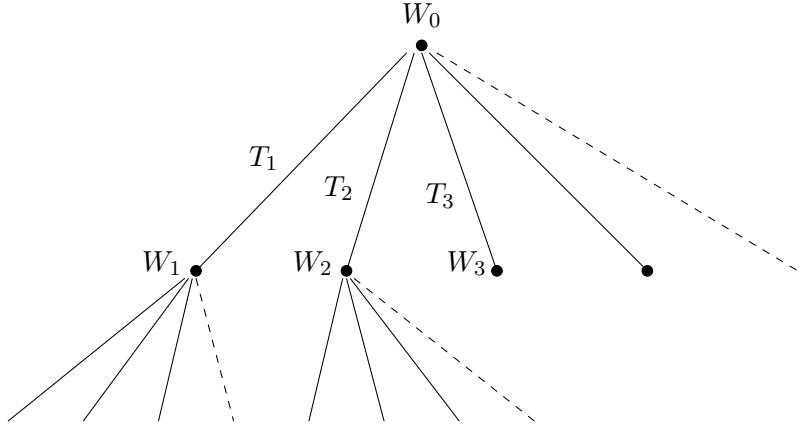
\begin{figure}[htbp]
\begin{center}
\begin{tikzpicture}[scale=1]
%root
\filldraw[black] (0,0) circle (2pt) node{};
%first offspring edges
\draw (-0.2,-0.1) -- (-3, -3);
\draw (-0.1,-0.1) -- (-1, -3);
\draw (0.0,-0.1) -- (1, -3);
\draw (0.1,-0.1) -- (3, -3);
\draw [dashed] (0.2,-0.1) -- (5, -3);
%first offspring nodes
\filldraw[black] (-3,-3) circle (2pt) node{};
\filldraw[black] (-1,-3) circle (2pt) node{};
\filldraw[black] (1,-3) circle (2pt) node{};
\filldraw[black] (3,-3) circle (2pt) node{};
%\filldraw[black] (5,-3) circle (2pt) node{};
%second offspring
\draw (-3.15,-3.1) -- (-5.5, -5);
\draw (-3.1,-3.1) -- (-4.5, -5);
\draw (-3.05,-3.1) -- (-3.5, -5);
\draw [dashed] (-3.0,-3.1) -- (-2.5, -5);
%\filldraw[black] (-5.5,-5) circle (2pt) node{};
%\filldraw[black] (-4.5,-5) circle (2pt) node{};
%\filldraw[black] (-3.5,-5) circle (2pt) node{};
%\filldraw[black] (-2.5,-5) circle (2pt) node{};
\draw (-1.05,-3.1) -- (-1.5, -5);
\draw (-1,-3.1) -- (-0.5, -5);
\draw (-0.95,-3.1) -- (0.5, -5);
\draw [dashed] (-0.9,-3.1) -- (1.5, -5);
%\filldraw[black] (-1.5,-5) circle (2pt) node{};
%\filldraw[black] (-0.5,-5) circle (2pt) node{};
%\filldraw[black] (0.5,-5) circle (2pt) node{};
%\filldraw[black] (1.5,-5) circle (2pt) node{};
% draw labels
\draw (0,0.4) node[] {$W_0$};
\draw (-2.1,-1.5) node[] {$T_1$};
\draw (-1.1,-1.9) node[] {$T_2$};
\draw (0.25,-2) node[] {$T_3$};
\draw (-3.45,-2.9) node[] {$W_1$};
\draw (-1.45,-2.9) node[] {$W_2$};
\draw (0.6,-2.9) node[] {$W_3$};
\end{tikzpicture}
\end{center}
\caption{\small The PWIT with vertices labeled by their matching cost in their respective subgraphs. }
\label{fig:PWIT2}
\end{figure}

\begin{proof}[Proof of Proposition \ref{prop:rank_PWIT}]
For $j=1,2,\ldots$, let $W_j$ denote the matching cost of vertex $j$ in the PWIT in the stable matching on the subgraph consisting of $j$ and its descendants, that is, the edge $(0,j)$ is removed and a stable matching is then constructed on the connected component of vertex $j$; see Figure \ref{fig:PWIT2}. These components have the same structure as the PWIT, implying that $\{W_j\}_{j=0}^\infty$ are i.i.d.\ random variables. By Proposition \ref{prop:PWITcost}, the density is given by \eqref{typdens}. Recall that the cost of the edge $(0,j)$ is $T_j$ and note that $R = \min \{ j \geq 1 :T_j \leq W_j \}$. It follows that
$$
\PP(R=1) =  \PP(U_1 \leq Z_1)= \int_0^{\infty} F_{T_1}(w) f_{W}(w) \, dw= \int_0^{\infty} \frac{1-e^{-w}}{(1+w)^2} \, dw =e \int_1^{\infty} \frac{e^{-t}}{t} \, dt,
$$
where the last integral can be recogniced as -Ei(1) with Ei$(x)= - \int_{-x}^{\infty} \frac{e^{-t}}{t} \, dt $ denoting the exponential integral. This proves (i).

As for (ii), note that $\PP(R > r) = \PP(T_j > W_j, \forall\, j \leq r)$. We can compute this probability by considering an inhomogeneous Poisson process with rate $\lambda(t) = 1-F_W(t)$. Indeed, first consider a standard Poisson process with rate 1 where the event times represent the variables $\{T_j\}_{j \in \mathbb{N}}$, and then generate an inhomogeneous process by accepting an event at time $t$ independently with probability $1-F_W(t)$. The first accepted event is by construction the $R$th event of the original process and $\PP(R >r)$ is then the probability that the inhomogeneous Poisson process has no events before time $T_r$. Hence
$$
\PP(R>r) = \E\left[\PP(\text{no events before } T_r \, |\, T_r)\right] =\E\left[ e^{-\int_0^{T_r} (1-F_W(t)) \, dt} \right] 
= \E \left[\frac{1}{1+ T_r} \right].
$$
Since $\E[T_r] = \Var(T_r) = r$, we have that $T_r \sim r$ as $r\to\infty$ with deviations of order $\sqrt{r}$, and hence $\PP(R \geq r) \sim r^{-1}$.
\end{proof}

It remains to prove Theorem \ref{theorem4}. To this end, a bound on $|D_{v}(K_{n,n},s)|$ uniformly in $n$ is needed. This can be obtained from the bound on $|D_{0}(\mathcal{T},s)|$ in Proposition \ref{prop:noinfinitepaths}.

\begin{lemma}\label{lemmacard}
For $K_{n,n}$ with exponential edge costs with mean $n$, we have that 
$$\PP(|D_{v}(K_{n,n},s)| > e^{2s}) \leq e^{-s}$$ uniformly in $n$. 
\end{lemma}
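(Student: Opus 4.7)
The plan is to mirror, for $K_{n,n}$, the first-moment computation carried out for the PWIT in Proposition \ref{prop:noinfinitepaths}, and then apply Markov's inequality. Concretely, I will show that $\E[|D_v(K_{n,n},s)|]\le e^s$ uniformly in $n$, from which the stated bound
$$
\PP(|D_v(K_{n,n},s)|>e^{2s})\le \frac{\E[|D_v(K_{n,n},s)|]}{e^{2s}}\le e^{-s}
$$
follows directly.

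To bound $\E[|D_v(K_{n,n},s)|]$, for $k\ge 0$ let $N_k$ denote the number of descending paths of length $k$ from $v$ with all edge costs at most $s$. Any such path corresponds to a walk $v=w_0,w_1,\ldots,w_k$ together with costs $\tau(w_0,w_1)>\tau(w_1,w_2)>\cdots>\tau(w_{k-1},w_k)$, all at most $s$; note that strict inequalities force the edges of the walk to be distinct (paths that would require a repeated edge contribute $0$ in expectation). Fix such a walk. Its $k$ edges have i.i.d.\ exponential costs with mean $n$, so the joint density of $(\tau(w_0,w_1),\ldots,\tau(w_{k-1},w_k))$ is bounded above by $n^{-k}$ pointwise. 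Integrating over the simplex $\{0<s_k<\cdots<s_1<s\}$ gives
$$
\PP\bigl(\tau(w_{i-1},w_i)=s_i\text{ in decreasing order, all}\le s\bigr)\le \frac{1}{n^k}\cdot\frac{s^k}{k!}.
$$

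Since each step of a walk from $v$ in $K_{n,n}$ has at most $n$ choices for the next vertex, the number of walks of length $k$ starting at $v$ is at most $n^k$. Summing the above estimate over all such walks yields
$$
\E[N_k]\le n^k\cdot \frac{s^k}{k!\,n^k}=\frac{s^k}{k!},
$$
exactly matching the bound for the PWIT. Summing over $k\ge 0$ (including the trivial $k=0$ term) gives $\E[|D_v(K_{n,n},s)|]\le\sum_{k\ge 0}s^k/k!=e^s$, and Markov's inequality completes the proof.

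There is no serious obstacle here. The one minor subtlety relative to the PWIT argument is that a descending path in $K_{n,n}$ is an edge-simple walk that may revisit vertices, so a vertex is no longer the endpoint of at most one descending path; however, counting walks by vertex sequences (with the crude bound $n^k$) compensates exactly for the factor $n^{-k}$ arising from the $\mathrm{Exp}(1/n)$ edge densities, and the Poisson-process structure used on the PWIT is replaced by this density bound.
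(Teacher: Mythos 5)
Your proof is correct, but it takes a genuinely different route from the paper. The paper does not redo the first-moment computation on $K_{n,n}$: it constructs a monotone coupling under which $D_v(K_{n,n},s)\subseteq D_v(K_{n+1,n+1},s)$, so that $\PP(|D_v(K_{n,n},s)|>e^{2s})$ is bounded by its limit as $n\to\infty$, which by the local convergence to the PWIT (Proposition \ref{prop:couplingpaths}) equals $\PP(|D_0(\mathcal{T},s)|>e^{2s})$, and then applies Markov with the PWIT bound \eqref{eq:vv}. You instead prove the finite-$n$ expectation bound $\E[|D_v(K_{n,n},s)|]\le e^s$ directly: the $n^k$ walks of length $k$ from $v$ exactly compensate the pointwise density bound $n^{-k}$ for $k$ i.i.d.\ $\mathrm{Exp}(1/n)$ costs on the simplex $\{0<s_k<\cdots<s_1<s\}$, walks with a repeated edge contribute zero because the ordering is strict, and the number of vertices of $D_v(K_{n,n},s)$ is dominated by the number of descending paths since every vertex of that subgraph is the endpoint of at least one such path. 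Your argument is more self-contained -- it needs neither local weak convergence nor the coupling in $n$ (which in the paper is slightly delicate, since the scaled edge-weight distribution has mean $n$ on $K_{n,n}$ but mean $n+1$ on $K_{n+1,n+1}$, so freezing the old weights does not literally reproduce the law on the larger graph) -- whereas the paper's approach buys brevity by recycling the PWIT computation of Proposition \ref{prop:noinfinitepaths}. Both yield the same bound.
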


\begin{proof}
First note that $D_{v}(K_{n,n},s)$ and $D_{v}(K_{n+1,n+1},s)$ can be coupled so that $D_{v}(K_{n,n},s)\subseteq D_{v}(K_{n+1,n+1},s)$. Indeed, if $K_{n+1,n+1}$ is constructed from $K_{n,n}$ by adding one vertex to each of the two vertex sets and equipping the edges of these vertices with i.i.d.\ weights, while the weights of existing edges in $K_{n,n}$ remain the same, then the set of descending paths is non-decreasing. Given this, we obtain that
$$ 
\PP(|D_{v}(K_{n,n},s)| > e^{2s}) \leq \lim_{n \to \infty} \PP(|D_{v}(K_{n,n},s)| > e^{2s}) = \PP(|D_0(\mathcal{T},s)| > e^{2s}) \leq e^{-s},
$$
where the equality follows from Proposition \ref{prop:couplingpaths} and the last inequality follows from \eqref{eq:vv}.
\end{proof}

\begin{proof}[Proof of Theorem \ref{theorem4}]
Recall that $S_{n,n}^\vep$ denotes the stable matching based on edge costs \eqref{eq:omega_t}, that is, a proportion $\vep>0$ of the edge costs $\{\omega(e)\}_{e\in E_n}$ is resampled. Also, for a subgraph $G\subset K_{n,n}$, write $S^\vep(G)$ for the stable matching of $G$ based on the resampled set of edge weights. We will again work with scaled edge costs $\{n\omega_\vep(e)\}_{e\in E_n}$, since this will allow us to make use of Lemma \ref{lemmacard}. Fix a vertex $v\in V_n$ and let $nc(v)$ refer to its matching cost in the initial configuration (with $\vep=0$). We will show that, if $s$ is large, it is unlikely that an edge in $D_v(K_{n,n},s)$ or its boundary is resampled in such a way that the stable matching on $D_v(K_{n,n},s)$ is changed. This will prove the claim since, by Proposition \ref{prop:stabledescending}, vertex $v$ will be matched to the same partner in $S_{n,n}^\vep$ as in $S^\vep(D_v(K_{n,n},s))$ in the limit.

Fix $\vep>0$ and $s>0$, where $s$ will later be chosen as a function of $\vep$. Let $A_{v,n,s}$ be the event that at least one edge in $D_{v}(K_{n,n},s)$ is resampled. By Lemma \ref{lemmacard}, we have that
\begin{equation}
\label{PA}
\begin{split}
\PP(A_{v,n,s}) = &  \,\PP\left(A_{v,n,s} \, | \, |D_{v}(K_{n,n},s)| > e^{2s}\right) \PP\left(|D_{v}(K_{n,n},s)|> e ^{2s}\right) \\
 & +\PP\left(A_{v,n,s}\, | \,  |D_{v}(K_{n,n},s)|\leq e^{2s}\right) \PP\left(|D_{v}(K_{n,n},s)| \leq e^{2s}\right) \\
 \leq &\, e^{-s} + \varepsilon\, e^{2s},
\end{split}
\end{equation}
 uniformly in $n$. Define the edge boundary of $D_{v}(K_{n,n},s)$ to be the set of edges in $K_{n,n}$ with exactly one endpoint in $D_{v}(K_{n,n},s)$. Similarly, let $B_{v,n,s}$ be the event that an edge in the boundary of $D_{v}(K_{n,n},s)$ is resampled and, in addition, that its new (scaled) cost is less than $s$. Using Lemma \ref{lemmacard}, the fact that there are at most $ne^{2s}$ edges on the boundary if $|D_{v}(K_{n,n},s)| \leq e^{2s}$ and a similar split as in \eqref{PA}, we obtain that
\begin{equation}\label{PB}
 \PP(B_{v,n,s}) =  e^{-s} + \varepsilon\, ne^{2s} (1-e^{-s/n}) \leq  e^{-s} + \varepsilon s e^{2s},
\end{equation}
uniformly in $n$. Write $M_{\vep}(v)$ for the matching partner of $v$ in $S_{n,n}^\vep$. If $nc(v)<s$, then, by Proposition \ref{prop:stabledescending}, vertex $v$ is matched in $S^0(D_{v}(K_{n,n},s))$ and the partner is the same as in $S_{n,n}^0$. Furthermore, on the event $A_{v,n,s}^c\cap B^c_{v,n,s}$, the resampled and the non-resampled configurations on $D_{v}(K_{n,n},s))$ coincide and, in addition, no vertex that is matched in $S^\vep(D_{v}(K_{n,n},s)))$ prefers a vertex outside of $D_{v}(K_{n,n},s))$ before its partner in $D_{v}(K_{n,n},s))$. It follows from the same argument as in the proof of Proposition \ref{prop:stabledescending} that $v$ is matched to the same vertex in $S_{n,n}^\vep$ as in $S^\vep(D_{v}(K_{n,n},s)))$. Also, by the above,  $v$ is matched to the same vertex in $S^\vep(D_{v}(K_{n,n},s)))$ as in $S_{n,n}^0$. Hence
\begin{equation*}
\begin{split}
\lim_{n \to \infty} \PP(M_0(v)\neq M_\vep (v)) & \leq \lim_{n \to \infty} \big[\PP(nc(v) \geq s)  + \PP(A_{v,n,s}) +  \PP(B_{v,n,s}) \big]\\
&  \leq \frac{1}{1+s}+ \vep  e^{2s} + \vep s e^{2s}  \\
& \leq \frac{3}{s} + 2\vep s e^{2s},
\end{split}
\end{equation*}
where in the second inequality we have used Theorem \ref{theorem2} and \eqref{PA}--\eqref{PB}, while in the last inequality we have used the fact that $e^{-s} \leq \frac{1}{s}$. Letting $s=C'\log (\frac{1}{\vep})$, with $C'<1/2$, we obtain for some $\delta>0$ that
$$
 2\vep s e^{2s}=2C'\vep^{1-2C'}\log\left(\frac{1}{\vep}\right)\leq \vep^\delta\leq \frac{1}{\log\left(\frac{1}{\vep}\right)}
$$
so that hence
$$
\lim_{n \to \infty} \PP(M_0(v)\neq M_\vep (v))  \leq  \frac{C}{\log\left(\frac{1}{\varepsilon}\right)},
$$
for any $C>7$. Consequently, the probability that the edge $(v,M(v))$ is present both in $S_{n,n}^0$ and in $S_{n,n}^\vep$ is given by 
$$
\lim_{n \to \infty} \PP\left(M_0(v)=M_\vep(v)\right)\geq 1-\frac{C}{\log\left(\frac{1}{\varepsilon}\right)}.
$$
Summing over all $n$ edges in the stable matching gives the desired result
$$ 
\lim_{n \to \infty} \frac{\E\left[|S_{n,n}^0 \cap S_{n,n}^{\varepsilon}| \right]}{n/2} \geq 1 - C \frac{1}{\log\left(\frac{1}{\varepsilon}\right)}.
$$ 
\end{proof}

\section{Sensitivity of the tail of the matching}

In this section, we prove Theorem \ref{thm:tailsensitivity}, stating that the most expensive eges in $S_{n,n}^0$ and $S_{n,n}^\vep$ are with high probability different. Recall that $L_\vep(m)$ denotes the sets of the $m$ most expensive edges in $S_{n,n}^\vep$. To ease notation, we write $L_\vep(m)=L_\vep$ and abbreviate $L_0=L$. Note that, before perturbing the costs, no edge connecting a vertex in $L$ to a vertex in $L^c$ is included in the matching. To establish the theorem, we will show that, for every vertex $u\in L$ there will be many vertices $v\in L^c$ for which the edge cost of $(u,v)$ is resampled in such a way that $v$ prefers to be rematched to $u$. In order to make sure that $u$ also desires $v$, and that the cost of the new match is not among the $m$ most expensive edges in the new matching, we require that resampled edges have costs below a certain threshold $\delta$, which it is unlikely that any edge in $L$ falls below. The core of the proof will be to establish two key lemmas, formalising this outline. First however we will require some information regarding the magnitude and concentration of the edge weights of the stable matching.

%\begin{theorem}\label{thm:tailsensitivity}
%Let $m\ge1$ and $\vep\in(0,1]$ satisfy $m\ll\vep\log n$ as $n\to\infty$. Then, with high probability as $n \to \infty$, none of the edges in $A$ remains in the matching after perturbation and the two sets $A$ and $A_\vep$ are disjoint. More precisely, the probability of this not happening is at most
%$$
%\frac{24}{\log n} + \frac{m^2}{(\log n)^3} + \frac{48m}{\vep \log n} = o(1).
%$$
%\end{theorem}

\subsection{Concentration of the matching costs}

Recall that $Y_1,Y_2,\ldots,Y_n$ denote the costs of the edges in the stable matching $S_{n,n}^0$, ordered from cheapest to most expensive. By the representation in~\eqref{yk}, the cost of the $k$th cheapest edge is 
\begin{equation}\label{eq:Yk}
Y_k=\sum_{i=1}^kX_i,
\end{equation}
where $X_1,X_2,\ldots,X_n$ are independent and exponentially distributed random variables where the parameter of $X_i$ is $(n-i+1)^2$. It follows, in particular, that
$$
\E[Y_{n-\ell}]=\sum_{i=1}^{n-\ell}\E[X_i]=\sum_{i=\ell+1}^n\frac{1}{i^2}
$$
and
$$
\Var(Y_{n-\ell})=\sum_{i=1}^{n-\ell}\Var(X_i)=\sum_{i=\ell+1}^n\frac{1}{i^4}.
$$
Approximating the sum with an integral leads to
\begin{equation}\label{eq:Y_rev}
\frac{1}{\ell+1}-\frac{1}{n}\le\E[Y_{n-\ell}]\le\frac{1}{\ell}\quad\text{and}\quad\Var(Y_{n-\ell})\le\frac{1}{3\ell^3}.
\end{equation}
This yields the following concentration bounds on the final most expensive edges of the matching.

\begin{lemma}\label{lma:Y_reverse}
For $\ell\ge1$ and $n\ge6\ell$ we have
$$
\PP\Big(Y_{n-\ell}<\frac{1}{6\ell}\Big)\le\frac{12}{\ell}\quad\text{and}\quad\PP\Big(Y_{n-\ell}>\frac{7}{6\ell}\Big)\le\frac{12}{\ell}.
$$
\end{lemma}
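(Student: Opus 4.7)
The plan is a direct application of Chebyshev's inequality to the moment estimates recorded in \eqref{eq:Y_rev}, so this should be a short routine computation rather than a delicate argument.

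First I would expand the first bound in \eqref{eq:Y_rev} using the hypothesis $n \ge 6\ell$. Since $\frac{1}{\ell+1} \ge \frac{1}{2\ell}$ for every $\ell \ge 1$ and $\frac{1}{n} \le \frac{1}{6\ell}$, we get
$$
\E[Y_{n-\ell}] \;\ge\; \frac{1}{\ell+1} - \frac{1}{n} \;\ge\; \frac{1}{2\ell} - \frac{1}{6\ell} \;=\; \frac{1}{3\ell}.
$$
Combined with the upper bound $\E[Y_{n-\ell}] \le 1/\ell$, this means that both of the thresholds $\frac{1}{6\ell}$ and $\frac{7}{6\ell}$ are a distance at least $\frac{1}{6\ell}$ away from $\E[Y_{n-\ell}]$: for the lower tail, $\E[Y_{n-\ell}] - \tfrac{1}{6\ell} \ge \tfrac{1}{3\ell} - \tfrac{1}{6\ell} = \tfrac{1}{6\ell}$, and for the upper tail, $\tfrac{7}{6\ell} - \E[Y_{n-\ell}] \ge \tfrac{7}{6\ell} - \tfrac{1}{\ell} = \tfrac{1}{6\ell}$.

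Next I would apply Chebyshev's inequality with the variance bound $\Var(Y_{n-\ell}) \le 1/(3\ell^3)$ from \eqref{eq:Y_rev}. In both cases we obtain
$$
\PP\!\left(\,\bigl|Y_{n-\ell}-\E[Y_{n-\ell}]\bigr| > \tfrac{1}{6\ell}\right) \;\le\; \frac{\Var(Y_{n-\ell})}{(1/(6\ell))^2} \;\le\; \frac{1/(3\ell^3)}{1/(36\ell^2)} \;=\; \frac{12}{\ell},
$$
which yields the two claimed inequalities.

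The only subtlety is the lower-tail estimate, where I need the hypothesis $n \ge 6\ell$ to upgrade $\E[Y_{n-\ell}] \ge \frac{1}{\ell+1}-\frac{1}{n}$ into a clean bound of the form $c/\ell$ that is sufficiently above $\frac{1}{6\ell}$. This is precisely where the factor $6$ in the hypothesis is used, and is the step that requires the most care; the rest is bookkeeping with Chebyshev.
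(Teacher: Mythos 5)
Your proposal is correct and follows exactly the paper's argument: the paper also deduces $\tfrac{1}{3\ell}\le\E[Y_{n-\ell}]\le\tfrac{1}{\ell}$ from \eqref{eq:Y_rev} under the hypothesis $n\ge6\ell$ and then applies Chebyshev's inequality with the variance bound $\Var(Y_{n-\ell})\le\tfrac{1}{3\ell^3}$. You have merely written out the routine computations that the paper leaves implicit, and all of them check out.
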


\begin{proof}
For $\ell\ge1$ and $n\ge6\ell$ we have from~\eqref{eq:Y_rev} that $\frac{1}{3\ell}\le\E[Y_{n-\ell}]\le\frac{1}{\ell}$, so the result follows from~\eqref{eq:Y_rev} and Chebyshev's inequality.
\end{proof}

Summing over $k$ in~\eqref{eq:Yk} gives the accumulated cost of the edges in the matching. In particular, $C_{n,n}=\sum_{k=1}^nY_k$. The accumulated cost of the first $n-\ell$ edges is
$$
\sum_{k=1}^{n-\ell}Y_k=\sum_{i=1}^{n-\ell}(n-\ell-i+1)X_i.
$$
Hence,
$$
\sum_{k=1}^{n-\ell}\E[Y_k]=\sum_{i=1}^{n-\ell}(n-\ell-i+1)\E[X_i]=\sum_{i=\ell+1}^{n}\frac{i-\ell}{i^2}.
$$
Similarly,
$$
\Var\bigg(\sum_{k=1}^{n-\ell}Y_k\bigg)=\sum_{i=1}^{n-\ell}(n-\ell-i+1)^2\Var(X_i)=\sum_{i=\ell+1}^n\frac{(i-\ell)^2}{i^4}.
$$
Comparing the sums to integrals, for $\ell\ll n$, leads to the bounds
\begin{equation}\label{eq:Y_partsum}
\log\Big(\frac{n}{\ell}\Big)-2\le\sum_{k=1}^{n-\ell}\E[Y_k]\le\log\Big(\frac{n}{\ell}\Big)\quad\text{and}\quad\Var\bigg(\sum_{k=1}^{n-\ell}Y_k\bigg)\sim\frac{1}{3\ell}.
\end{equation}
Finally, reversing the sum and using~\eqref{eq:Y_rev}, we get that for all $n\ge1$
\begin{equation}\label{eq:Y_squares}
\sum_{k=1}^{n}\E[Y_k^2]=\sum_{\ell=1}^n\big(\Var(Y_{n-\ell})+\E[Y_{n-\ell}]^2\big)\le \sum_{\ell=1}^n\Big(\frac{1}{3\ell^3}+\frac{1}{\ell^2}\Big) \leq \frac{\zeta(3)}{3} + \frac{\pi^2}{6} \leq 3,
\end{equation}
where $\zeta(s)$ is the Riemann zeta function.

\subsection{Key lemmas}

Set $\delta:=(\log n)^{-3}$. By Lemma~\ref{lma:Y_reverse}, it is unlikely for the last $m$ edges of the matching to have cost below $\delta$. As we shall see, the threshold $\delta$ is chosen so that it remains unlikely for the cost of edges between vertices in the set $L$ of the $m$ most expensive edges in the original matching to fall below $\delta$ even after the costs have been resampled.

Recall that $\omega_\vep$ denotes the configuration of edge costs after an $\vep$-perturbation. Given a vertex $u$, we denote by $c_\vep^u(v)$ the cost of the vertex $v$ in the stable matching of $K_{n,n}$ with respect to $\omega_\vep$ where $u$ has been removed (and one node is necessarily left unmatched). For $u\in L$ let
\begin{align*}
J_u&:=\big\{v\in L^c:(u,v)\text{ resampled}\big\},\\
N_u&:=\#\big\{v\in J_u:\omega'(u,v)<c_\vep^u(v)\wedge\delta\big\},
\end{align*}
where $\#$ denotes the cardinality of the set. 

The key step towards Theorem~\ref{thm:tailsensitivity} is to show that, with high probability $N_u\ge1$ for all $u\in L$. In order to do that, we compare the costs of the edges in $S_{n,n}$ to the costs of the stable matching of $K_{n,n}$ when a vertex $u$ has been removed. Note that the matching of $K_{n,n}$ with a vertex removed will contain $n-1$ edges, and we denote their weights by $Y_1^u,Y_2^u,\ldots,Y_{n-1}^u$ in increasing order.

\begin{lemma}\label{lma:domination}
Almost surely, we have for every vertex $u$ and all $k=1,2,\ldots,n-1$ that
$$
Y_k\le Y_k^u\le Y_{k+1}.
$$
\end{lemma}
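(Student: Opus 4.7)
The plan is to reformulate the greedy algorithm as processing the edges of $K_{n,n}$ in increasing order of weight and accepting an edge whenever both of its endpoints are still unmatched. Running this procedure in parallel on $K_{n,n}$ and on $K_{n,n}\setminus\{u\}$, where in the latter all edges incident to $u$ are simply skipped, produces matchings with edge-weight sequences $Y_k$ and $Y_k^u$, respectively. Writing $S_t$ and $S_t^u$ for the partial matchings after the first $t$ edges of $K_{n,n}$ have been processed, my aim is to prove by induction on $t$ a structural invariant that controls the symmetric difference.

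The invariant I would maintain is that $S_t\triangle S_t^u$ is always a single (possibly empty) path $P_t$ starting at $u$, whose edges alternate between $S$-edges and $S^u$-edges and begin with an $S$-edge. Given this, the path contributes $\lceil|P_t|/2\rceil$ edges to $S_t$ and $\lfloor|P_t|/2\rfloor$ edges to $S_t^u$, while all other edges of both matchings coincide, so $|S_t|-|S_t^u|\in\{0,1\}$ at every time $t$. Since $Y_k$ (respectively $Y_k^u$) is the weight of the edge that brings $|S_t|$ (respectively $|S_t^u|$) from $k-1$ to $k$, the bound $|S_t^u|\le|S_t|$ immediately yields $Y_k\le Y_k^u$, and $|S_t^u|\ge|S_t|-1$ yields $Y_k^u\le Y_{k+1}$.

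To verify the invariant, I would carry out a case analysis on the edge $E=(p,q)$ processed at step $t+1$. When $E$ is incident to $u$, only $S$ may act: either $u$ is already matched in $S_t$, which by the invariant forces $P_t$ to be nonempty and the edge to be rejected with $P_t$ unchanged, or $P_t$ is empty and $E$ either initiates a length-one path (if $q$ is unmatched) or is rejected. When $E$ is not incident to $u$ and neither endpoint lies on $P_t$, the two algorithms see identical information and agree on their decision. When exactly one endpoint of $E$ lies strictly inside $P_t$, it is flanked by one $S$-edge and one $S^u$-edge and is thus matched in both $S_t$ and $S_t^u$, so both reject. The substantive subcase is when an endpoint of $E$ equals the far end $v_m$ of $P_t$: then $v_m$ is matched in exactly one of the two matchings (determined by the parity of $m$), so precisely one algorithm accepts $E$, provided the other endpoint is unmatched there, and the new edge extends $P_t$ by one edge of the correct alternating type.

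The main obstacle is this last endpoint case. I need to argue that the only matching edge incident to $v_m$ in either $S_t$ or $S_t^u$ is the final edge of $P_t$, so that the matched versus unmatched status of $v_m$ in the two matchings is governed solely by the parity of $|P_t|$. This follows from the invariant applied at time $t$: any other incident matching edge would belong to $S_t\cap S_t^u$ and hence force $v_m$ to be matched in both, contradicting $v_m$'s role as an endpoint of $P_t$.
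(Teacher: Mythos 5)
Your proposal is correct and takes essentially the same approach as the paper: both construct the two matchings dynamically by processing edges in increasing order of cost and show that the symmetric difference is at all times a single alternating path anchored at $u$ (the paper's red/blue colouring of discrepancy edges), so that the sizes of the two partial matchings differ by at most one, which yields the interlacing $Y_k\le Y_k^u\le Y_{k+1}$.
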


\begin{proof}
Fix a vertex $u$ in $K_{n,n}$. We will take a dynamic perspective on the construction of the matching, where we think of the weights $\omega$ as the times of the first rings of independent Poisson clocks associated with the edges of $K_{n,n}$. We may then construct the matching dynamically in time, by adding an edge $e$ at time $\omega(e)$ unless either of its endpoints has already been matched at an earlier time.

In order to address the discrepancy between $S_{n,n}$ and the matching of $K_{n,n}$ with $u$ removed, with respect to the same configuration $\omega$, we colour `red' the edges in $S_{n,n}$ that are not part of the matching with $u$ removed, and `blue' the edges in the matching with $u$ removed, which are not in $S_{n,n}$. The edges that are in both matchings are not coloured, that is, they remain `black'. Note that the first time a coloured edge is added to either of the two matchings is when $u$ is added to $S_{n,n}$, since this is the only discrepancy between the two weighted graphs. This edge is red, and we denote by $r_1$ its weight, and by $v_1$ its endpoint not equal to $u$. The discrepancy at time $r_1$ is now moved to the vertex $v_1$. Either $v_1$ is left unmatched, or the next coloured edge added to the matching comes when $v_1$ is matched in $K_{n,n}$ with $u$ removed. This edge is thus blue, and we let $b_1$ denote its cost and $u_2$ its endpoint other than $v_1$. Since $v_1$ is added after $u$, we have $r_1<b_1$, and the discrepancy in the two constructions is now transferred to $u_2$. Repeating the above argument we find an alternating sequence of red and blue edges being added to the graph, starting and ending with a red edge, whose weights are similarly alternating
$$
r_1<b_1<r_2<b_2\ldots<r_\ell
$$
for some $\ell\ge1$. Since coloured edges are added alternatingly, there is at any time at most one more red than blue edge present, and never more blue than red. Consider the $k$th edge added to the matching of $K_{n,n}$ with $u$ removed, which happens at time $Y_k^u$. Either this edge is black, in which case it is either the $k$th or $(k+1)$st edge added to $S_{n,n}$, and hence $Y_k^u$ equals either $Y_k$ or $Y_{k+1}$. Or the edge is blue, in which case $S_{n,n}$ already consists of $k$ but not $k+1$ edges, and so $Y_{k}<Y_k^u<Y_{k+1}$. This holds for every $u$ and $k=1,2,\ldots,n-1$.
\end{proof}

Our main step towards Theorem~\ref{thm:tailsensitivity} is a moment analysis of $N_u$ for $u\in L$. For ease of notation, we shall let $\PP':=\PP(\,\cdot\,|(L,L^c))$, and write $\E'$ and $\Var'$ for expectation and variance with respect to $\PP'$. Note that the law of the cost of a vertex, under $\PP'$, depends on whether the vertex belongs to $L$ or $L^c$, whereas the law of $(Y_1,Y_2,\ldots,Y_n)$ is equal under $\PP$ and $\PP'$. 

\begin{lemma}\label{lma:Nu}
For $m\ge1$ and $\vep\in(0,1]$ satisfying $2m\le\vep\log n$, we have that for every vertex $u\in L$ the two following statements hold:
\begin{itemize}
\item[\rm{(i)}] $\E'[N_u]=(1+o(1))\vep\log n$;
\item[\rm{(ii)}] $\Var'(N_u)\le6\vep\log n$.
\end{itemize}
\end{lemma}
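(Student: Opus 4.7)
The plan is to write $N_u$ as a sum of indicators rendered conditionally independent by a suitable conditioning, and then to reduce both moments to moments of the truncated total cost of the $u$-removed matching, which are controlled by the estimates in Section~5.1. Assume without loss of generality that $u\in L\cap V_n$. The variable $c_\vep^u(v)$ depends only on $\omega_\vep$ restricted to edges not incident to $u$, hence it is independent of $U(u,v)$ and $\omega'(u,v)$ for every $v$. Letting $\mathcal{F}:=\sigma(c_\vep^u(\cdot),\{U(u,v)\}_v)$, conditional on $\mathcal{F}$ the events $\{\omega'(u,v)<c_\vep^u(v)\wedge\delta\}$ are independent Bernoulli$(1-e^{-c_\vep^u(v)\wedge\delta})$, so
\begin{equation*}
\E'[N_u\mid\mathcal{F}]=\sum_{v\in L^c\cap V'_n}\mathbb{1}\{U(u,v)\le\vep\}\bigl(1-e^{-c_\vep^u(v)\wedge\delta}\bigr),\qquad \Var'(N_u\mid\mathcal{F})\le\E'[N_u\mid\mathcal{F}].
\end{equation*}

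For (i), I would take expectations and use independence of $\{U(u,v)\}_v$ from $c_\vep^u$ to write $\E'[N_u]=\vep\sum_{v\in L^c\cap V'_n}\E'[1-e^{-c_\vep^u(v)\wedge\delta}]$. The key observation is that $\sum_{v\in V'_n}c_\vep^u(v)\wedge\delta$ equals $\sum_{k=1}^{n-1}Y_k^{u,\vep}\wedge\delta$ up to the additive contribution of the unique unmatched vertex, where $(Y_k^{u,\vep})_k$ are the ordered edge costs of the perturbed matching with $u$ removed. Lemma~\ref{lma:domination} applied in the perturbed configuration gives $Y_k^\vep\le Y_k^{u,\vep}\le Y_{k+1}^\vep$, where $Y_k^\vep$ is the $k$th cheapest edge of $S_{n,n}^\vep$, and $(Y_k^\vep)\stackrel{d}{=}(Y_k)$ since $\omega_\vep$ is marginally i.i.d.\ Exp$(1)$. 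Splitting the sum at $k\approx n-(\log n)^3$ and using \eqref{eq:Y_rev} in conjunction with a Chebyshev estimate as in Lemma~\ref{lma:Y_reverse} gives $\E[\sum_k Y_k\wedge\delta]=\log n-3\log\log n+O(1)$. The bound $|1-e^{-x\wedge\delta}-x\wedge\delta|\le\tfrac12\delta(x\wedge\delta)$ then costs only a $(1+o(1))$ factor, and the contribution of $v\in L\cap V'_n$ is at most $m\delta=o(1)$ under $2m\le\vep\log n$. This yields $\E'[N_u]=(1+o(1))\vep\log n$.

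For (ii), the law of total variance gives $\Var'(N_u)\le\E'[N_u]+\Var'(\E'[N_u\mid\mathcal{F}])$. Setting $f(x):=1-e^{-x\wedge\delta}\le\delta$ and $Z_v:=\mathbb{1}\{U(u,v)\le\vep\}f(c_\vep^u(v))$, independence of the Bernoulli indicators $\mathbb{1}\{U(u,v)\le\vep\}$ from each other and from $c_\vep^u$ produces the decomposition
\begin{equation*}
\Var'\Bigl(\sum_v Z_v\Bigr)\le\vep\sum_v\E'[f(c_\vep^u(v))^2]+\vep^2\Var'\Bigl(\sum_v f(c_\vep^u(v))\Bigr)\le\delta\E'[N_u]+\vep^2\Var'\Bigl(\sum_{k=1}^n Y_k^\vep\wedge\delta\Bigr),
\end{equation*}
where the last step uses $f(x)\le x\wedge\delta$. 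I would then apply Efron--Stein to the representation $Y_k^\vep=\sum_{i\le k}X_i^\vep$ with $X_i^\vep$ independent Exp$((n-i+1)^2)$: replacing a single $X_j^\vep$ by an i.i.d.\ copy shifts $\sum_k Y_k^\vep\wedge\delta$ by at most $(n-j+1)|X_j^\vep-{X_j^\vep}'|$ by 1-Lipschitzness of $x\mapsto x\wedge\delta$, giving $\sum_j 2(n-j+1)^2\Var(X_j^\vep)=\sum_j 2/(n-j+1)^2\le\pi^2/3$. Putting everything together, $\Var'(N_u)\le(1+o(1))\vep\log n+\vep^2\cdot O(1)\le 6\vep\log n$ for $n$ large.

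The main technical hurdle is that under $\PP'=\PP(\cdot\mid(L,L^c))$ the perturbed configuration $\omega_\vep$ is not i.i.d., since the non-resampled entries inherit the modified law $\omega\mid(L,L^c)$. To legitimately invoke $(Y_k^\vep)\stackrel{d}{=}(Y_k)$ and the Efron--Stein bound above, I would leverage the observation already noted in the paper that $(Y_1,\ldots,Y_n)$ has the same law under $\PP'$ as under $\PP$, combined with the fact that $\omega'$ and $U$ are independent of $(L,L^c)$ by construction, to extend the identity to $(Y_k^\vep)$ and to the increments $X_i^\vep$ of the perturbed matching cost sequence.
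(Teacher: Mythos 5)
Your proof has the same skeleton as the paper's: condition away the update variables and the weights incident to $u$ so that both moments of $N_u$ reduce to moments of $\sum_{v\in L^c}F(c_\vep^u(v)\wedge\delta)$, linearize $F$, and use Lemma~\ref{lma:domination} to sandwich this quantity between relabeling-invariant functionals of $\omega_\vep$, whose law under $\PP'$ coincides with that under $\PP$ by exchangeability (the point you flag at the end is resolved exactly as you suggest, and the paper relies on the same fact). The one genuinely different ingredient is your bound on the variance of the conditional mean: the paper controls $\Var'(H)$ by splitting on three events according to the location of $H$ relative to $a$ and $b$ and patching together $\Var(C_{n,n})$, $\Var(\sum_{k\le n-\ell_n}Y_k)$ and the width $b-a=O(\log\log n)$, arriving at $\Var'(H)\le200(\log\log n)^2$; your Efron--Stein bound on the independent increments $X_j^\vep$ gives a stronger $O(1)$ bound in one line. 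That is a real simplification worth keeping.

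One step, however, is not valid as written: the inequality $\Var'\big(\sum_vf(c_\vep^u(v))\big)\le\Var'\big(\sum_kY_k^\vep\wedge\delta\big)$ cannot be justified by $f(x)\le x\wedge\delta$, since a pointwise inequality between nonnegative variables implies nothing about their variances. The repair uses the two-sided control you already employ in part (i): since $f$ is increasing and bounded by $\delta$, and the values $\{c_\vep^u(v):v\in L^c\}$ form $n-m$ distinct elements of $\{Y_1^{u,\vep},\ldots,Y_{n-1}^{u,\vep},\infty\}$, Lemma~\ref{lma:domination} gives the deterministic sandwich
$$
\sum_{k=1}^{n-m}f(Y_k^\vep)\;\le\;\sum_{v\in L^c}f(c_\vep^u(v))\;\le\;\sum_{k=1}^{n}f(Y_k^\vep)+\delta,
$$
whose outer bounds differ by at most $(m+1)\delta=o(1)$. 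If $B_1\le A\le B_2$ with $\|B_2-B_1\|_\infty\le\eta$, then $\Var(A)\le2\Var(B_1)+8\eta^2$, so it suffices to bound $\Var\big(\sum_{k=1}^{n-m}f(Y_k)\big)$, and your Efron--Stein argument applies verbatim to this functional because $f$ is $1$-Lipschitz. Do not route this repair through the multiplicative approximation $f(x)=(1+O(\delta))(x\wedge\delta)$: the resulting discrepancy $\tfrac{\delta}{2}\sum_kY_k^\vep\wedge\delta$ is only $o(1)$ in expectation, not uniformly, so it does not give an $L^\infty$ sandwich. With this fix your chain $\Var'(N_u)\le(1+\delta)\E'[N_u]+\vep^2O(1)\le6\vep\log n$ closes for large $n$, and the remaining imprecisions in part (i) (the exact cutoff near $n-(\log n)^3$, the second-order term in $\E[\sum_kY_k\wedge\delta]$) are harmless.
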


\begin{proof}
We prove the two statements separately.
\begin{itemize}
\item[(i)]
Fix a vertex $u\in L$ and let $F(x)=1-e^{-x}$ denote the distribution function of the exponential distribution. By conditioning on everything but the update variables $U_e$ for the edges that connect $u$ to $L^c$, we find that
$$
\E'[N_u]=\vep\sum_{v\in B}\PP'\big(\omega'(u,v)<c_\vep^u(v)\wedge\delta\big).
$$
Conditioning, this time on the weight configuration $\omega_\vep$ for edges not incident to $u$, we find that
$$
\E'[N_u]=\vep\sum_{v\in B}\E'\big[F(c_\vep^u(v)\wedge\delta)\big]
$$
Define $H:=\sum_{v\in L^c}c_\vep^u(v)\wedge\delta$. Using that $x-\frac12x^2\le F(x)\le x$, Lemma~\ref{lma:domination} and~\eqref{eq:Y_squares}, we obtain that
\begin{equation}\label{eq:ENu}
\Big|\E'[N_u]-\vep\, \E'[H]\Big|\le\frac\vep2\sum_{v\in L^c}\E'\big[(c_\vep^u(v)\wedge\delta)^2\big]\le\frac{\vep}{2}\sum_{k=1}^n\E[Y_k^2]\le2\vep.
\end{equation}
Another application of Lemma~\ref{lma:domination} gives that
\begin{equation} \label{eq:upperbounds}
\sum_{k=1}^{n-m}\E[Y_k\wedge\delta]\le\E'[H]\le\sum_{k=1}^n\E[Y_k].
\end{equation}
Let $\ell_n=\lceil2(\log n)^3\rceil$ and set $E=\{Y_{n-\ell_n}\le\delta\}$, where $\delta = (\log n)^{-3}$. Then, Lemma~\ref{lma:Y_reverse} gives $\PP(E^c)\le6/(\log n)^3$. Consequently, using Cauchy-Schwartz' inequality and Theorem~\ref{theorem1}, we obtain that
$$
\E\bigg[\sum_{k=1}^nY_k{\bf 1}_{E^c}\bigg]\le \sqrt{\E[C_{n,n}^2]\PP(E^c)}\le\sqrt{\big(\Var(C_{n,n})+\E[C_{n,n}]^2\big)\PP(E^c)}\le\frac{4}{\sqrt{\log n}}.
$$
Hence, for large $n$,
$$
\sum_{k=1}^{n-m}\E[Y_k\wedge\delta]\ge\E\bigg[\sum_{k=1}^{n-\ell_n}Y_k{\bf 1}_{E}\bigg]\ge\E\bigg[\sum_{k=1}^{n-\ell_n}Y_k\bigg]-\E\bigg[\sum_{k=1}^{n-\ell_n}Y_k{\bf 1}_{E^c}\bigg]\ge\sum_{k=1}^{n-\ell_n}\E[Y_k]-1,
$$
which, combined with~\eqref{eq:Y_partsum} and~\eqref{eq:upperbounds}, gives
\begin{equation}\label{eq:cb_bound}
\log n-4\log\log n\le\sum_{k=1}^{n-\ell_n}\E[Y_k]-1\le\E'[H]\le\sum_{k=1}^n\E[Y_k]\le\log n.
\end{equation}
Together with~\eqref{eq:ENu}, this shows that $\E'[N_u]=(1+o(1))\vep\log n$.

\item[(ii)]
First note that we can express $N_u$ as
$$
N_u=\sum_{v\in L^c}{\bf 1}_{\{v\in J_u\}}{\bf 1}_{\{\omega'(u,v)<c_\vep^u(v)\wedge\delta\}}.
$$
Expanding the square and conditioning, first on everything but the update variables $U_e$ for edges that connect $u$ to $L^c$, and then on the weight configuration $\omega_\vep$ for edges not incident to $u$, we obtain that
\begin{equation*}
\begin{aligned}
\E'[N_u^2]&=\vep\sum_{v\in L^c}\E'\big[F(c_\vep^u(v)\wedge\delta)\big]+\vep^2\sum_{\substack{v,v' \in L^c \\ v\neq v'}}\E'\big[F(c_\vep^u(v)\wedge\delta)F(c_\vep^u(v')\wedge\delta)\big]\\
&\le\E'[N_u]+\vep^2\,\E'\bigg[\Big(\sum_{v\in L^c}F(c_\vep^u(v)\wedge\delta)\Big)^2\bigg]\\
&\le\E'[N_u]+\vep^2\,\E'\bigg[\Big(\sum_{v\in L^c}c_\vep^u(v)\wedge\delta\Big)^2\bigg] \\
& \le \E'[N_u]+\vep^2\,\E'\left[ H^2 \right].
\end{aligned}
\end{equation*}
Combining the above with~\eqref{eq:ENu} and~\eqref{eq:cb_bound}, we get that
\begin{equation}\label{eq:var_bound}
\Var'\big(N_u\big) = \E'[N_u^2] - \E'[N_u]^2 \le \E'[N_u]+\vep^2\bigg(\Var'(H)+4\log n\bigg).
\end{equation}
Next, we introduce three events $D_1=\{Z>b\}$, $D_2=\{Z\in[a,b]\}$ and $D_3=\{Z<a\}$, where $a=\sum_{k=1}^{n-\ell_n}\E[Y_k]-1$ and $b=\sum_{k=1}^n\E[Y_k]$. We bound the variance of $H$ by estimating its contribution restricted to each of the events $D_1$, $D_2$ and $D_3$, that is,
$$
\Var'(H)=\E'\big[(H-\E'[H])^2{\bf 1}_{D_1}\big]+\E'\big[(H-\E'[H])^2{\bf 1}_{D_2}\big]+\E'\big[H-\E'[H])^2{\bf 1}_{D_3}\big].
$$
First, we note from~\eqref{eq:cb_bound} that $a\le\E'[H]\le b$ and that $b-a\le 4\log\log n$, which immediately gives
$$
\E'\big[(H-\E'[H])^2{\bf 1}_{D_2}\big]\le(b-a)^2\le16(\log\log n)^2.
$$
Second, by adding and subtracting $b$, and using that $(x+y)^2\le 4x^2+4y^2$, we find that
$$
\E'\big[(H-\E'[H])^2{\bf 1}_{D_1}\big]\le 4\,\E'\big[(H-b)^2{\bf 1}_{D_1}\big]+4\,\big(\E'[H]-b\big)^2.
$$
Using Lemma~\ref{lma:domination}, and that we have restricted to the event $D_1$, we obtain the further upper bound
$$
4\,\E'\bigg[\Big(\sum_{k=1}^nY_k-b\Big)^2{\bf 1}_{D_1}\bigg]+4(b-a)^2\le 4\,\Var(C_{n,n})+64(\log\log n)^2.
$$
Third, by adding and subtracting $a$, and using that $(x+y)^2\le 4x^2+4y^2$, we find that
$$
\E'\big[(H-\E'[H])^2{\bf 1}_{D_3}\big]\le4\,\E'\big[(H-a)^2{\bf 1}_{D_3}\big]+4\,\big(\E'[H]-a\big)^2.
$$
Recall that $E=\{Y_{n-\ell_n}\le\delta\}$. Using Lemma~\ref{lma:domination} and~\eqref{eq:Y_partsum}, and that we have restricted to the event $D_3$, we obtain that
\begin{align*}
\E'\big[(H-a)^2{\bf 1}_{D_3}\big]&\le\E\bigg[\Big(\sum_{k=1}^{n-\ell_n}Y_k\wedge\delta-a\Big)^2{\bf 1}_{D_3}\bigg]\le\E\bigg[\Big(\sum_{k=1}^{n-\ell_n}Y_k-a\Big)^2{\bf 1}_{D_3\cap E}\bigg]+\E\big[a^2{\bf 1}_{D_3\cap E^c}\big]\\
&\le \Var\bigg(\sum_{k=1}^{n-\ell_n}Y_k\bigg)+a^2\,\PP(E^c)=o(1).
\end{align*}
Hence, for large $n$,
$$
\E'\big[(H-\E'[H])^2{\bf 1}_{D_3}\big]\le 1+4(b-a)^2\le 1+64(\log\log n)^2.
$$
In conclusion, we get that $\Var'(H)\le 200(\log\log n)^2$ and hence, via~\eqref{eq:var_bound} that for $n$ large enough 
$$\Var'(N_u)\le \vep \log n +5\vep^2\log n \leq 6\vep \log n.$$
\end{itemize}
\end{proof}

\subsection{Proof of Theorem~\ref{thm:tailsensitivity}}

Recall that for $\vep \in (0,1]$, we assume that $m \ll \vep \log n$ and $\delta = (\log n)^{-3}$. Let $G_1=\{Y_{n-m}>\delta,Y_{n-m}^\vep>\delta\}$, where $Y_k^\vep$ denotes the $k$th cheapest edge in the matching $S^{\vep}_{n,n}$, $G_2=\{\omega'(u,u')>\delta\text{ for all }u,u'\in L\}$, and $G_3=\bigcap_{u\in L}\{N_u\ge1\}$. Finally, set $G=G_1\cap G_2\cap G_3$.

We start by bounding the probability that $G$ fails. First, by Lemma~\ref{lma:Y_reverse} we have
$$
\PP(G_1^c)\le2\PP(Y_{n-m}\le\delta)\le2\PP(Y_{n-\lceil\log n\rceil}\le\delta)\le\frac{24}{\log n}.
$$
Second, conditioning on the division $(L,L^c)$ and using the union bound, we have that
$$
\PP(G_2^c)\le\E\bigg[\sum_{u,u'\in L}\PP'\big(\omega'(u,u')\le\delta\big)\bigg]\le m^2 F(\delta) \le m^2 \frac{1}{(\log n)^3},
$$
where $F(x)=1-e^{-x}\le x$ again denotes the distribution function of the exponential distribution. Third, the union bound, Chebyshev's inequality and Lemma~\ref{lma:Nu} give that
$$
\PP(G_3^c)\le \E\bigg[\sum_{u\in L}\PP'(N_u=0)\bigg]\le \E\bigg[\sum_{u\in L}4\frac{\Var'(N_u)}{\E'[N_u]^2}\bigg]\le 2m\frac{24}{\vep\log n}.
$$
In conclusion, 
\begin{equation}\label{eq:tailquant}
\PP(G^c) \leq \PP(G_1^c) + \PP(G_2^c) +\PP(G_3^c) \leq \frac{24}{\log n} + \frac{m^2}{(\log n)^3} + \frac{48m}{\vep \log n},
\end{equation}
which is $o(1)$ since $m \ll \vep \log n$. Hence $G$ occurs with high probability as $n\to\infty$. 

Note that, on $G_1$, we have $\omega(e)>\delta$ for every $e\in L$ and $\omega_\vep(e)>\delta$ for every $e\in L_\vep$. Moreover, on $G_1\cap G_2$ we have $\omega_\vep(e)>\delta$ for every $e\in L$, that is, all edges originally in $L$ still have cost exceeding $\delta$ after perturbation. We claim that, on $G_3$, every node in $L$ is matched with cost at most $\delta$ after perturbation. Once the claim is proved, we conclude that, on the event $G$, we have that after perturbation:
\begin{itemize}
\item every node in $L$ is rematched with cost at most $\delta$;
\item every edge in $L$ has cost exceeding $\delta$ and hence does not belong to the matching $S^{\vep}_{n,n}$;
\item the $m$ most expensive edges have cost exceeding $\delta$, implying that $L_\vep\subseteq L^c$.
\end{itemize}

It remains to prove the claim that, on $G_3$, every node in $L$ is matched with cost at most $\delta$ after perturbation. We again argue using a dynamic construction of the matching in which an edge is added to the matching at the `time' indicated by its cost, unless either of its endpoints has already been matched before that time. It is straightforward to verify that the matching obtained is indeed the stable matching $S^{\vep}_{n,n}$. In the dynamic construction, a vertex being unmatched at time $\delta$ is equivalent to the cost of the vertex exceeding $\delta$. Consequently, if a vertex $u\in L$ is left unmatched at time $\delta$ in the perturbed configuration, we have $c_\vep(u)>\delta$. Assume that a vertex $u$ is unmatched at time $\delta$, which implies that the matching obtained at time $\delta$ coincides with the matching obtained until time $\delta$ when $u$ is removed. In particular, it follows that
\begin{equation}\label{eq:u_removal}
c_\vep(v)\wedge\delta=c_\vep^u(v)\quad\text{for every }v\in L^c.
\end{equation}
On $G_3$, we have that $N_u\ge1$ for every $u\in L$. Hence there exists a vertex $v\in L^c$ such that
$$
\omega_\vep(u,v)<c_\vep^u(v)\wedge\delta.
$$
This contradicts~\eqref{eq:u_removal}, since it implies the existence of a vertex $v\in L^c$ which is unmatched at time $\omega_\vep(u,v)<\delta$, to which $u$ would therefore be matched to, unless it has already been matched before. In conclusion, on $G_3$ every node in $L$ is matched with cost at most $\delta$ after perturbation, as required. This ends the proof of the theorem. \qed

\section{Noise sensitivity of the stable matching}

In this section, we prove Theorem \ref{thm:weightsensitivity}. The proof will roughly go as follows. We first observe that the bulk of the matching is responsible for most of the matching cost, whereas the cost of the bulk of the bulk of the matching is highly concentrated, so that most of the randomness comes from the tail of the last edges. We then dynamically construct the matchings $S_{n,n}$ and $S_{n,n}^{\vep}$, by equipping each edge with a Poisson clock and adding it to the corresponding matching when its clock rings, if adding the edge is allowed. By concentration of the bulk of the matching, most edges added are the same in both matchings. However, by Theorem~\ref{thm:tailsensitivity}, we will reach a point in time when the  remaining sets of unmatched vertices correspond to disjoint subgraphs. From this point on, we are waiting for independent sets of clocks to ring. The contributions to the matchings obtained from this phase will therefore be independent and, since this phase is responsible for most of the randomness in the construction of the matching, the correlation of the matching costs $C_{n,n}^0$ and $C_{n,n}^\vep$ will be small.

Given $m\ge1$, denote by $W_m^-(\omega)$ and $W_m^+(\omega)$ the cost of the matching that is detected in the matching of the first $n-m$ and last $m$ edges, respectively. In the notation of~\eqref{yk}, we have
$$
W_m^-(\omega)=\sum_{k=1}^{n-m}(n-k+1)X_k\quad\text{and}\quad W_m^+(\omega)=\sum_{k=n-m+1}^n(n-k+1)X_k.
%=\sum_{k=1}^{m}kX_{n-k+1}.
$$
Note that $C_{n,n}=W_m^-+W_m^+$. In particular we find that
$$
\E[W_m^-]=\sum_{k=m+1}^n\frac{1}{k}\quad\text{and}\quad\E[W_m^+]=\sum_{k=1}^m\frac{1}{k},
$$
and hence that $\E[W_m^-]\sim\log(n/m)$ and $\E[W_m^+]\sim\log m$ for $1\ll m\ll n$. In addition, we have
$$
\Var(W_m^-)=\sum_{k=m+1}^n\frac{1}{k^2}\le\frac{1}{m}\quad\text{and}\quad\Var(W_m^+)=\sum_{k=1}^m\frac{1}{k^2}\le\frac{\pi^2}{6}.
$$
That is, while little weight remains to be picked up at the end of the matching, most of the randomness comes from that part.

\begin{proof}[Proof of Theorem~\ref{thm:weightsensitivity}]
Fix $m\ge1$. We first decompose the covariance according to
\begin{equation}\label{eq:covariance}
\begin{aligned}
\Cov\big(C_{n,n}^0,C_{n,n}^\vep)\big)&=\Cov\big(W_m^-(\omega),W_m^-(\omega_\vep)\big)+\Cov\big(W_m^-(\omega),W_m^+(\omega_\vep)\big)\\
&\quad+\Cov\big(W_m^+(\omega),W_m^-(\omega_\vep)\big)+\Cov\big(W_m^+(\omega),W_m^+(\omega_\vep)\big).
\end{aligned}
\end{equation}
Since $\Var(W_m^-)\le\frac{1}{m}$, an application of Cauchy-Schwartz gives
$$
\big|\Cov\big(W_m^-(\omega),W_m^-(\omega_\vep)\big)\big|\le\Var(W_m^-)\le\frac{1}{m},
$$
and
$$
\big|\Cov\big(W_m^-(\omega),W_m^+(\omega_\vep)\big)\big|\le\sqrt{\Var(W_m^-)\Var(W_m^+)}\le\frac{2}{\sqrt{m}}.
$$
This gives
\begin{equation}\label{eq:cov_bound}
\big|\Cov\big(C_{n,n}^0,C_{n,n}^\vep\big)\big|\le\big|\Cov\big(W_m^+(\omega),W_m^+(\omega_\vep)\big)\big|+\frac{5}{\sqrt{m}}.
\end{equation}

Write $T$ for the time at which the two subgraphs induced by the unmatched nodes in the two configurations $\omega$ and $\omega_\vep$ become disjoint. Let
$$
Q:=\big\{T\le\min\{Y_{n-m},Y_{n-m}^\vep\}\big\}
$$
and note that, on the event $Q$, when matching the last $m$ edges, we are waiting for disjoint sets of Poisson clocks to ring. We next show that $Q$ occurs with high probability. Set $\ell=\sqrt{\vep\log n}$ and let $Q_1$ denote the event that the subgraphs induced by the vertices of the last $7\ell$ edges of the matching in $\omega$ and $\omega_\vep$ are disjoint. In addition, let
\begin{align*}
Q_2&=\Big\{\max\{Y_{n-7\ell},Y_{n-7\ell}^\vep\}\le\frac{1}{6\ell}\Big\},\\
Q_3&=\Big\{\min\{Y_{n-\ell},Y_{n-\ell}^\vep\}\ge\frac{1}{6\ell}\Big\}.
\end{align*}
By assumption, we have $\vep\log n\gg1$, so that $m\le\ell$ when $n$ is large. It follows that, on $Q_1\cap Q_2\cap Q_3$, we have for large $n$ that
$$
T\le\max\{Y_{n-7\ell},Y_{n-7\ell}^\vep\}\le\min\{Y_{n-\ell},Y_{n-\ell}^\vep\}\le\min\{Y_{n-m},Y_{n-m}^\vep\},
$$fd
so that $Q_1\cap Q_2\cap Q_3\subseteq Q$. Note that the probability of $Q_1^c$ can be upper bounded using the quantitative bound \eqref{eq:tailquant} leading to Theorem~\ref{thm:tailsensitivity}, while the probabilities of $Q_2^c$ and $Q_3^c$ can be bounded using Lemma~\ref{lma:Y_reverse}. Hence
\begin{equation}
\label{eq:Gcompl}
\PP(Q^c)\le\PP(Q_1^c)+\PP(Q_2^c)+\PP(Q_3^c)\le\frac{350}{\sqrt{\vep\log n}}+2\frac{12}{7\ell}+2\frac{12}{\ell} \le\frac{400}{\sqrt{\vep\log n}}
\end{equation}
for sufficiently large $n$.

Decomposing the covariance depending on the event $Q$ gives
\begin{equation}\label{eq:cov_decomp}
\begin{aligned}
\Cov\big(W_m^+(\omega),W_m^+(\omega_\vep)\big)&=\E\big[\big(W_m^+(\omega)-\E[W_m^+]\big)\big(W_m^+(\omega_\vep)-\E[W_m^+]\big){\bf 1}_{Q}\big]\\
&\quad+\E\big[\big(W_m^+(\omega)-\E[W_m^+]\big)\big(W_m^+(\omega_\vep)-\E[W_m^+]\big){\bf 1}_{Q^c}\big].
\end{aligned}
\end{equation}
Note that $Q$ depends on the Poisson clocks in $\omega$ and $\omega_\vep$ before time $\min\{Y_{n-m},Y_{n-m}^\vep\}$, whereas $W_m^+(\omega)$ and $W_m^+(\omega_\vep)$ depend on the clocks after time $\min\{Y_{n-m},Y_{n-m}^\vep\}$. Moreover, on $Q$, we have that $W_m^+(\omega)$ and $W_m^+(\omega_\vep)$ are functions of disjoint sets of clocks. It follows that, on $Q$, we have
$$
\big(W_m^+(\omega),W_m^+(\omega_\vep)\big)\stackrel{d}{=}\big(W_m^+(\omega),W_m^+(\tilde\omega)\big),
$$
and hence that
$$
\E\big[\big(W_m^+(\omega)-\E[W_m^+]\big)\big(W_m^+(\omega_\vep)-\E[W_m^+]\big){\bf 1}_{Q}\big]=\E\big[\big(W_m^+(\omega)-\E[W_m^+]\big)\big(W_m^+(\tilde\omega)-\E[W_m^+]\big){\bf 1}_{Q}\big],
$$
where $\tilde \omega$ indicates a cost configuration independent from $\omega$. Using that $xy\le x^2+y^2$, on $Q^c$ we get
$$
\E\big[\big(W_m^+(\omega)-\E[W_m^+]\big)\big(W_m^+(\omega_\vep)-\E[W_m^+]\big){\bf 1}_{Q^c}\big]\le2\,\E\big[\big(W_m^+(\omega)-\E[W_m^+]\big)^2{\bf 1}_{Q^c}\big]
$$
and similarly
$$
\E\big[\big(W_m^+(\omega)-\E[W_m^+]\big)\big(W_m^+(\tilde\omega)-\E[W_m^+]\big){\bf 1}_{Q^c}\big]\le2\,\E\big[\big(W_m^+(\omega)-\E[W_m^+]\big)^2{\bf 1}_{Q^c}\big].
$$
From~\eqref{eq:cov_decomp} we obtain
$$
\big|\Cov\big(W_m^+(\omega),W_m^+(\omega_\vep)\big)\big|\le\big|\Cov\big(W_m^+(\omega),W_m^+(\tilde\omega)\big)\big|+4\,\E\big[\big(W_m^+(\omega)-\E[W_m^+]\big)^2{\bf 1}_{Q^c}\big].
$$
Note that, for fixed $m$ and for $\vep=\vep(n)$ such that $\vep\log n\to\infty$ as $n\to\infty$, we have from~\eqref{eq:Gcompl} that $\PP(Q^c) = o(1)$. Moreover, $W_m^+(\omega)$ and $W_m^+(\tilde\omega)$ are independent and $W_m^+(\omega)$ equals $C_{m,m}$ in distribution. It hence follows from dominated convergence (or the reverse Fatou lemma) that
$$
\Cov\big(W_m^+(\omega),W_m^+(\omega_\vep)\big)\to 0.
$$
Since $m$ was arbitrary, we conclude from~\eqref{eq:cov_bound} that
$$
\Cov\big(C_{n,n}^0,C_{n,n}^\vep\big)\to0.
$$
This completes the proof, since $\Var(C_{n,n})\to\pi^2/6$ as $n\to\infty$.
\end{proof}

\subsection*{Acknowledgement.} The authors thank Svante Janson for discussions during the early part of this project, and in particular for indicating the correct limit law for the matching cost. The first author further thanks Marcelo Campos, Simon Griffiths and Rob Morris for discussions regarding the possible sensitivity of the tail of the matching. This work was in part supported by the Swedish Research Council (VR) through grant 2021-03964 (DA) and 2020-04479 (MD and MS).

\newcommand{\noopsort}[1]{}\def\cprime{$'$}

\end{document}